\def\coe{c_\Omega^\varepsilon}
\newtheorem{theorem}{Theorem}
\newtheorem{lemma}[theorem]{Lemma}
\newtheorem{proposition}[theorem]{Proposition}
\theoremstyle{definition}
\newtheorem{remark}[theorem]{Remark}
\title{\textbf{Extension theorem for $bmo$ in a domain}}
\author{Zhongyang Gu \thanks{zgu@ms.u-tokyo.ac.jp}}
\affil{Graduate School of Mathematical Sciences, The University of Tokyo, 3-8-1 Komaba, Meguro-ku, 153-8914, Tokyo, Japan}
\begin{document}
\date{}
\maketitle
\begin{abstract}
The $bmo$ space, also known as the local $BMO$ space, is the $BMO$ space which is uniformly locally $L^1$ in addition.
In this article, we establish an extension theorem for the $bmo$ space defined in an arbitrary uniformly $C^2$ domain. This extension theorem results in a product estimate for $bmo$ functions defined in an arbitrary uniformly $C^2$ domain.
\end{abstract}

\begin{center}
Keywords: Extension theorem, $bmo$, product estimate.
\end{center}

\section{Introduction}
For a function space defined in an open domain $\Omega \subset \mathbf{R}^n$, it is natural to consider the problem if functions of this space can be continuously extended from $\Omega$ to $\mathbf{R}^n$. For example, if $f$ is in $L^p(\Omega)$ with $1 \leq p \leq \infty$, its zero extension $f^{ze} = f \cdot 1_{\Omega}$ naturally belongs to $L^p(\mathbf{R}^n)$ where $1_\Omega$ denotes the characteristic function for domain $\Omega$. Although such extension problem is trivial for $L^p$, the story completely changes when it comes to the space of bounded mean oscillation ($BMO$ for short). In the case for $BMO$, $f \in BMO^\infty(\Omega)$ is not sufficient to have that $f^{ze} \in BMO(\mathbf{R}^n)$. In fact, there exist domains $\Omega$ where bounded linear extension operator from $BMO^\infty(\Omega)$ to $BMO(\mathbf{R}^n)$ does not exist. P.\ W.\ Jones \cite{PJ} gives a necessary and sufficient condition for a domain such that there exists a bounded linear extension operator. 

An open connected subset $D \subset \mathbf{R}^n$ is called a uniform domain if there exists constants $a,b>0$ such that for all $x,y \in D$ there exists a rectifiable curve $\gamma \subset D$ of length $s(\gamma) \leq a|x-y|$ with $\min \left\{s \left(\gamma(x,z)\right), s\left(\gamma(y,z)\right) \right\} \leq b d(z, \partial D)$, where $\gamma(x,z)$ denotes the part of $\gamma$ between $x$ and $z$ on the curve and $d(z, \partial D) = \inf_{w \in \partial D} |z - w|$ denotes the distance from $z$ to the boundary $\partial D$; see e.g. \cite{GO}. 
Let $D \subset \mathbf{R}^n$ be a uniform domain.
Jones' extension theorem guarantees that there is a constant $C_J$ such that for each $f \in BMO^\infty(D)$, there is an extension $\overline{f} \in BMO(\mathbf{R}^n)$ satisfying
\[
[\overline{f}]_{BMO(\mathbf{R}^n)} \leq C_J [f]_{BMO^\infty(D)}
\]
with $C_J$ independent of $f$. The operator $f \mapsto \overline{f}$ is a bounded linear operator. Conversely, if there exists such an extension, then $D$ is a uniform domain.

In \cite{GigaGu2}, a small modification was made to Jones' extension theorem so that we obtained an extension theorem regarding the local BMO space $bmo_\infty^\infty(D) := BMO^\infty(D) \cap L_{\mathrm{ul}}^1(D)$ where
\[
L^1_{\mathrm{ul}}(D) := \left\{ f \in L^1_{\mathrm{loc}} (D) \biggm| \| f \|_{L^1_{\mathrm{ul}}(D)} := \sup_{x \in \mathbf{R}^n} \int_{B_1(x) \cap D} \bigl| f(y) \bigr| \, dy < \infty \right\}.
\]
If $D$ is a uniform domain, the modified Jones' extension theorem says that for $f \in bmo_\infty^\infty(D)$ there exists $\overline{f} \in bmo := BMO \cap L_{\mathrm{ul}}^1(\mathbf{R}^n)$ satisfies
\begin{align} \label{bEE}
\| \overline{f} \|_{bmo(\mathbf{R}^n)} \leq C_J \| f \|_{bmo_\infty^\infty(D)}
\end{align}
with $C_J$ independent of $f$. Moreover, the support of $\overline{f}$ is contained in a small neighborhood of $\overline{D}$. The reason why we are interested in such local $BMO$ spaces ($bmo$) is that multiplication by a H$\ddot{\text{o}}$lder function in such spaces is bounded, i.e., for $\varphi \in C^\gamma(D)$ with $\gamma \in (0,1)$, we have that $\varphi f \in bmo_\infty^\infty(D)$ satisfies the product estimate
\begin{align} \label{bPE}
\| \varphi f \|_{bmo_\infty^\infty(D)} \leq C_J \| \varphi \|_{C^\gamma(D)} \| f \|_{bmo_\infty^\infty(D)}
\end{align}
with $C_J$ independent of $\varphi$ and $f$. 
Because of this multiplication principle, cut-off becomes possible in the space $bmo_\infty^\infty(D)$. The product estimate for $bmo$ follows from the fact that such estimate holds for the local Hardy space $h^1$ and $bmo$ is the dual space of $h^1$, see e.g. \cite[Section 3]{Sa}.

Since the extension theorem and the product estimate for $bmo_\infty^\infty(D)$ relies heavily on the original extension theorem by Jones, we don't know if these results hold or not in the case where $D$ is not a uniform domain. For instance, an aperture domain is an example for a non-uniform domain which is of special interests in fluid mechanics.

Our goal in this article is to establish the extension theorem for $bmo_\infty^\infty(\Omega)$ in the case where $\Omega$ is any arbitrary uniformly $C^2$ domain. We would like to clarify several relevant concepts before we state our main theorem. Let $\Omega \subset \mathbf{R}^n$ be a uniformly $C^2$ domain with $n \geq 2$. Let $\Gamma := \partial \Omega$ denotes the boundary of $\Omega$.
Let $R_0$ be the reach of the boundary $\Gamma = \partial \Omega$. By considering $R_0$ sufficiently small, we may assume that $R_0$ is not only the reach of $\Gamma$ in $\Omega$ but also the reach of $\Gamma$ in $\Omega^c$.
Let $d$ denote the signed distance function from $\Gamma$ which is defined by
\begin{equation*} 
	d(x) = \left \{
\begin{array}{r}
	\inf_{y\in\Gamma}|x-y| \quad\text{for}\quad x\in\Omega, \\
	-\inf_{y\in\Gamma}|x-y| \quad\text{for}\quad x\notin\Omega 
\end{array}
	\right.
\end{equation*}
so that $d(x)=d_\Gamma(x)$ for $x\in\Omega$.
For $0<\rho<R_0$, let $\Gamma_\rho$ be the $\rho$-neighborhood of $\Gamma$ in $\Omega$, i.e.,
\[
\Gamma_\rho = \{ x \in \Omega \mid d_\Gamma(x) < \rho \}
\]
and $\Gamma^\rho$ be the $\rho$-neighborhood of $\Gamma$ in $\mathbf{R}^n$, i.e.,
\[
\Gamma^\rho = \{ x \in \mathbf{R}^n \mid |d(x)| < \rho \}.
\]
We recall the $BMO^\mu$-seminorm for $\mu\in(0,\infty]$ which was defined in \cite{BG}, \cite{BGMST}, \cite{BGS}, \cite{BGST}.
For $f \in L^1_\mathrm{loc}(\Omega)$, we define
\[
	[f]_{BMO^\mu(\Omega)} := \sup \left\{ \frac{1}{\left|B_r(x)\right|} \int_{B_r(x)} \left| f(y) - f_{B_r(x)} \right| \, dy \biggm|
	B_r(x) \subset \Omega,\ r < \mu \right\},
\]
where $f_B$ denotes the average over $B$, i.e.,
\[
	f_B := \frac{1}{|B|} \int_B f(y) \, dy
\]
and $B_r(x)$ denotes the closed ball of radius $r$ centered at $x$ and $|B|$ denotes the Lebesgue measure of $B$.
 The space $BMO^\mu(\Omega)$ is defined as
\[
	BMO^\mu(\Omega) := \left\{ f \in L^1_\mathrm{loc}(\Omega) \bigm|
	[f]_{BMO^\mu} < \infty \right\}.
\]
As in \cite{GigaGu2}, for $\delta \in (0,\infty]$ we set
\[
bmo_\delta^\mu(\Omega) := BMO^\mu(\Omega) \cap L_{\mathrm{ul}}^1(\Gamma_\delta)
\]
with the norm
\[
	\| v \|_{bmo^\mu_\delta} := [v]_{BMO^\mu(\Omega)} + [ v ]_{L^1_{\mathrm{ul}}(\Gamma_\delta)}.
\]
We are now in a position to state our main result.

\begin{theorem} \label{MET}
Let $\Omega \subset \mathbf{R}^n$ be a uniformly $C^2$ domain with $n \geq 2$. There exists $c_\Omega^\ast > 0$ such that for any $\rho \in (0, c_\Omega^\ast)$ and $v \in bmo_\infty^\infty(\Omega)$, there is an extension $\widetilde{v} \in bmo(\mathbf{R}^n)$ such that 
\[
\| \widetilde{v} \|_{bmo(\mathbf{R}^n)} \leq \frac{C}{\rho^n} \| v \|_{bmo_\infty^\infty(\Omega)}
\]
with $C$ independent of $v$ and $\rho$. Moreover, $\operatorname{supp} \widetilde{v} \subset \overline{\Omega_{2 \rho}}$ where
\[
\Omega_{2 \rho} := \{ x \in \mathbf{R}^n \mid d(x, \overline{\Omega}) < 2 \rho \}.
\]
The operator $v \mapsto \widetilde{v}$ is a bounded linear operator.
\end{theorem}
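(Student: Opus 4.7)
The plan is to construct $\widetilde{v}$ by reflecting $v$ across $\Gamma$ inside the tubular neighborhood $\Gamma^{R_0}$, multiplying by a smooth cutoff to confine its support inside $\overline{\Omega_{2\rho}}$, and then estimating the $bmo(\mathbf{R}^n)$ norm through a case analysis on the radius and location of balls. I would fix $c_\Omega^\ast$ with $2 c_\Omega^\ast < R_0$, so that for every $\rho \in (0, c_\Omega^\ast)$ the signed distance $d$ is $C^2$ on $\Gamma^{2\rho}$ and the nearest-point projection $\pi \colon \Gamma^{R_0} \to \Gamma$ is single-valued and $C^1$. For $x$ with $-2\rho < d(x) < 0$ define the reflection $x^\ast := x - 2 d(x)\, \nabla d\bigl(\pi(x)\bigr)$, which sends such $x$ diffeomorphically to a point with $0 < d(x^\ast) < 2\rho$ and has Jacobian determinant bounded above and below by constants depending only on $\Omega$. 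Set $v^\ast(x) := v(x)$ for $x \in \Omega$, $v^\ast(x) := v(x^\ast)$ for $-2\rho < d(x) < 0$, and $v^\ast(x) := 0$ elsewhere, and let $\chi_\rho$ be a smooth cutoff with $\chi_\rho \equiv 1$ on $\overline{\Omega} \cup \{|d| \leq \rho\}$, $\chi_\rho \equiv 0$ on $\{d(\cdot, \overline{\Omega}) \geq 2\rho\}$, and $\|\nabla \chi_\rho\|_\infty \leq C/\rho$. The extension is then $\widetilde{v} := \chi_\rho v^\ast$; linearity of $v \mapsto \widetilde{v}$ and the support condition $\mathrm{supp}\, \widetilde{v} \subset \overline{\Omega_{2\rho}}$ are immediate.

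For the $L^1_{\mathrm{ul}}(\mathbf{R}^n)$ bound I would split any unit-ball integral $\int_{B_1(z)} |\widetilde{v}|$ into the parts over $\Omega$ and over $\Omega^c \cap \Gamma^{2\rho}$; on the second the substitution $y = x^\ast$ converts it into an integral of $|v|$ over a subset of $\Omega$ of comparable diameter. Both pieces are bounded by $\|v\|_{L^1_{\mathrm{ul}}(\Omega)}$, yielding $\|\widetilde{v}\|_{L^1_{\mathrm{ul}}(\mathbf{R}^n)} \leq C \|v\|_{L^1_{\mathrm{ul}}(\Omega)}$ with $C$ depending only on $\Omega$.

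The heart of the argument is the estimate of $[\widetilde{v}]_{BMO(\mathbf{R}^n)}$. For a ball $B_r(z) \subset \mathbf{R}^n$ I would distinguish three regimes. First, when $B_r(z) \subset \Omega$ the oscillation is trivially $[v]_{BMO^\infty(\Omega)}$; and when $B_r(z) \subset \{-\rho < d < 0\}$ with $r < \rho/C$, one has $\chi_\rho \equiv 1$ on $B_r(z)$ and the reflection is a bi-Lipschitz diffeomorphism onto a set contained in a ball of comparable radius inside $\Omega$, so the oscillation of $\widetilde{v}$ pulls back to that of $v$ and is again controlled by $[v]_{BMO^\infty(\Omega)}$. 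Second, if $B_r(z)$ meets $\Gamma$ with $r \leq \rho/C$, a local $C^2$-flattening of $\Gamma$ reduces the computation to a half-space reflection of $v$; the oscillation is then bounded by $[v]_{BMO^\infty(\Omega)}$ on a ball of radius $Cr$ inside $\Omega$, with the curvature of $\Gamma$ giving only an $O(r)$ perturbation of the Euclidean reflection that is absorbed into the constant. If $B_r(z)$ lies in the shell $\{-2\rho < d < -\rho\}$ with $r \leq \rho/C$, then $\chi_\rho$ varies by only $O(r/\rho)$ across $B_r(z)$ and a product-type splitting reduces the estimate to the reflection case plus a harmless $O(r/\rho)\,|(v^\ast)_{B_r}|$ correction. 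Third, when $r \geq \rho/C$ I would use the crude estimate
\[
\frac{1}{|B_r(z)|}\int_{B_r(z)}\bigl|\widetilde{v} - (\widetilde{v})_{B_r(z)}\bigr|\,dy \leq \frac{2}{|B_r(z)|}\|\widetilde{v}\|_{L^1(B_r(z))} \leq \frac{C}{\rho^n}\|\widetilde{v}\|_{L^1_{\mathrm{ul}}(\mathbf{R}^n)},
\]
which is precisely where the $\rho^{-n}$ factor enters; all balls on which $\chi_\rho$ varies by order one fall into this regime.

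The main obstacle will be the small-ball boundary case in the second regime, where the interaction of the curvature of $\Gamma$ with the reflection has to be controlled. The key point is that in normal coordinates based at $\pi(x)$, $x^\ast$ differs from the Euclidean reflection by an $O(|d(x)|^2)$ term, so on a ball of radius $r \leq \rho/C$ meeting $\Gamma$ the map $x \mapsto x^\ast$ is a $C^1$-small perturbation of an isometry and the flat-case oscillation estimate transfers with constants depending only on $\Omega$. The cutoff's gradient affects only the third regime through the $L^1_{\mathrm{ul}}$ channel, so it contributes only to the $\rho^{-n}$ term and does not worsen the final dependence.
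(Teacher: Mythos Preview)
Your overall strategy---reflect across $\Gamma$ inside the tubular neighborhood and cut off---is the same as the paper's, and your large-radius regime (the crude $L^1_{\mathrm{ul}}$ bound producing the $\rho^{-n}$ factor) is exactly right. The genuine gap is in your second regime. For a small ball $B_r(z)$ meeting $\Gamma$ you assert that ``the oscillation is then bounded by $[v]_{BMO^\infty(\Omega)}$ on a ball of radius $Cr$ inside $\Omega$.'' No such single ball exists: the set $B_r(z)\cap\Omega$ (respectively its pullback under your flattening $F$) touches $\Gamma$, and every ball of radius comparable to $r$ lying strictly in $\Omega$ fails to contain it. Even in the flat model $\Omega=\mathbf R^n_+$, bounding the oscillation of $v$ over a half-ball $B_r(z',0)\cap\mathbf R^n_+$ by the \emph{interior} seminorm $[v]_{BMO^\infty(\mathbf R^n_+)}$ is already a non-trivial statement, essentially equivalent to Jones' extension for the half-space; it does not follow from the reflection being an isometry. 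Your proposed fix---that $x\mapsto x^\ast$ is a $C^1$-small perturbation of Euclidean reflection---addresses only the curvature error, not this underlying half-ball versus interior-ball mismatch. The same issue resurfaces in your case $B_r(z)\subset\{-\rho<d<0\}$ when the ball is nearly tangent to $\Gamma$: then $R(B_r(z))$ is nearly tangent on the $\Omega$ side and sits inside no ball of radius $(1+\varepsilon)r$ contained in $\Omega$.

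The paper's machinery exists precisely to close this gap. Because $\Omega$ is not assumed uniform, Jones' theorem cannot be applied globally; instead, for each boundary point the paper constructs a bounded Lipschitz domain $F(W_\rho)\subset\Omega$ containing the relevant ball, with Lipschitz constant \emph{uniform} in the boundary point (Proposition~5), and applies Jones' extension there to get $v_{1,i}^J\in BMO(\mathbf R^n)$. Only after this local extension does the change of variables $F$ cause no trouble: the distorted ball $F(B_r(\zeta))$ may be enclosed in a Euclidean ball $B_{2r}(x)$ on which $v_{1,i}^J$ is defined, and the half-space even-extension lemma then applies cleanly. This local-Jones step is also what forces the paper's other structural choices that you bypass: the inside cut-off $v_1=\theta_\rho v$ and the product estimate for it on $F(W_\rho)$ (Lemma~6), and the partition of unity $\{\varphi_i\}$ needed to reassemble the local pieces (Lemmas~7--9). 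Your outside cut-off $\chi_\rho$ could in principle replace the $\theta_\rho/v_2^{ze}$ decomposition, but only once the boundary regime is repaired by a device of this kind.
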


Different from the construction by Jones which delicately deals with the Whitney decomposition of both $\Omega$ and $\Omega^{\mathrm{c}}$, our strategy firstly decomposes $v$ into the sum of $v_1$ and $v_2$ such that the support of $v_1$ is close to $\Gamma$ whereas the support of $v_2$ is away from $\Gamma$. Such decomposition of $v$ is achieved by the multiplication of $v$ with a cut-off function $\theta_\rho$ supported in a small neighborhood of $\Gamma$, i.e., $v_1 := \theta_\rho v$. Since $\Omega$ is not necessarily uniform, at this moment we cannot apply the product estimate that was established for the case of uniform domains to $v_1$ directly. Instead, we apply a localization argument so that we can estimate the $BMO^\rho$-seminorm of $v_1$ in $\Omega$. The key idea of the localization argument is as follow. If a ball $B$ of radius $r(B) \leq \rho$ in $\Omega$ is away from the boundary, then $v_1$ vanishes in this ball. If $B$ is close to the boundary, then we can find a bounded Lipschitz domain $W_\rho$ such that the boundary of $W_\rho$ coincides with $\Gamma$ for a small part and $B \subset W_\rho$. Since $\Gamma$ is uniformly $C^2$, by considering the normal coordinate change in $\Gamma^{R_0}$, we are able to show that the Lipschitz regularity of $\partial W_\rho$ can be uniformly controlled. As $r_{W_\rho} v_1 \in bmo_\infty^\infty(W_\rho)$, we can apply the product estimate to $r_{W_\rho} v_1$ in $W_\rho$. Since a bounded Lipschitz domain is a typical example of a uniform domain and the constant $C_J$ in (\ref{bEE}) and (\ref{bPE}) depends only on the Lipschitz regularity of the domain, we obtain a uniform estimate for $[v_1]_{BMO^\rho(\Omega)}$.

Next, we recall the extension introduced in \cite{GigaGu3} for functions supported in a small neighborhood of $\Gamma$. We extend $v_1$ to $v_1^e$ in $\mathbf{R}^n$ so that $v_1^e$ is even in the direction of $\nabla d$ with respect to $\Gamma$. By considering the normal coordinate change, we then reduce the problem to the half space and prove that $v_1^e \in bmo_\infty^\rho(\mathbf{R}^n)$. Since the $BMO^\infty$-seminorm can be estimated by the $bmo_\infty^\rho$-norm, we thus deduce that $v_1^e \in bmo(\mathbf{R}^n)$. For $v_2$, we simply zero extend it. By a similar argument, it is not hard to show that its zero extension $v_2^{ze} \in bmo(\mathbf{R}^n)$. Setting $\widetilde{v} = v_1^e + v_2^{ze}$ gives us Theorem \ref{MET}.

Since there exists a bounded linear extension operator from $C^\gamma(\Omega)$ to $C^\gamma(\mathbf{R}^n)$ for arbitrary domain $\Omega$, the product estimate for $bmo_\infty^\infty(\Omega)$ follows naturally from Theorem \ref{MET}.

\begin{theorem} \label{PEU}
Let $\Omega \subset \mathbf{R}^n$ be a uniformly $C^2$ domain with $n \geq 2$. Let $\varphi \in C^\gamma(\Omega)$ with $\gamma \in (0,1)$. For each $v \in bmo_\infty^\infty(\Omega)$, the function $\varphi v \in bmo_\infty^\infty(\Omega)$ satisfies
\[
\| \varphi v \|_{bmo_\infty^\infty(\Omega)} \leq C \| \varphi \|_{C^\gamma(\Omega)} \| v \|_{bmo_\infty^\infty(\Omega)}
\]
with $C$ independent of $\varphi$ and $v$.
\end{theorem}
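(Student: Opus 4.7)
The plan is to reduce Theorem \ref{PEU} to the corresponding product estimate on $\mathbf{R}^n$ by means of Theorem \ref{MET}. Given $v \in bmo_\infty^\infty(\Omega)$ and $\varphi \in C^\gamma(\Omega)$, I would first fix any $\rho \in (0, c_\Omega^\ast)$, say $\rho = c_\Omega^\ast/2$, and invoke Theorem \ref{MET} to obtain $\widetilde{v} \in bmo(\mathbf{R}^n)$ with
\[
\|\widetilde{v}\|_{bmo(\mathbf{R}^n)} \leq C_\Omega \|v\|_{bmo_\infty^\infty(\Omega)}.
\]
Independently, I would apply a Whitney (or McShane) type extension, which provides a bounded linear operator $C^\gamma(\Omega) \to C^\gamma(\mathbf{R}^n)$ for an arbitrary open set $\Omega$, to obtain $\widetilde{\varphi} \in C^\gamma(\mathbf{R}^n)$ with $\widetilde{\varphi}|_\Omega = \varphi$ and $\|\widetilde{\varphi}\|_{C^\gamma(\mathbf{R}^n)} \leq C \|\varphi\|_{C^\gamma(\Omega)}$.

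Next, I would invoke the product estimate on $\mathbf{R}^n$: since $bmo(\mathbf{R}^n)$ is the dual of the local Hardy space $h^1(\mathbf{R}^n)$ and multiplication by a $C^\gamma$ function is bounded on $h^1(\mathbf{R}^n)$ (see \cite{Sa}), one has
\[
\|\widetilde{\varphi}\widetilde{v}\|_{bmo(\mathbf{R}^n)} \leq C \|\widetilde{\varphi}\|_{C^\gamma(\mathbf{R}^n)} \|\widetilde{v}\|_{bmo(\mathbf{R}^n)}.
\]
Combining the three displayed estimates already yields $\|\widetilde{\varphi}\widetilde{v}\|_{bmo(\mathbf{R}^n)} \leq C \|\varphi\|_{C^\gamma(\Omega)} \|v\|_{bmo_\infty^\infty(\Omega)}$.

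Finally, I would restrict back to $\Omega$. Since $\widetilde{v}|_\Omega = v$ and $\widetilde{\varphi}|_\Omega = \varphi$, the restriction of $\widetilde{\varphi}\widetilde{v}$ to $\Omega$ equals $\varphi v$. Restriction cannot increase the $BMO^\infty(\Omega)$-seminorm, because every ball $B_r(x) \subset \Omega$ admissible in the definition of $[\cdot]_{BMO^\infty(\Omega)}$ is also a ball in $\mathbf{R}^n$; likewise, it cannot increase the $L^1_{\mathrm{ul}}$-norm, because $B_1(x) \cap \Omega \subset B_1(x)$. Hence $\|\varphi v\|_{bmo_\infty^\infty(\Omega)} \leq \|\widetilde{\varphi}\widetilde{v}\|_{bmo(\mathbf{R}^n)}$, which finishes the proof. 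There is essentially no obstacle here: all the difficulty of the theorem is concentrated in Theorem \ref{MET}, and once this is granted, the $\rho^{-n}$ factor is absorbed into the constant by fixing $\rho$ at the outset, while the $\mathbf{R}^n$ product estimate together with the trivial restriction step do the rest.
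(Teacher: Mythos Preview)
Your proposal is correct and follows essentially the same route as the paper: extend $v$ via Theorem~\ref{MET} (with a fixed $\rho$), extend $\varphi$ to $C^\gamma(\mathbf{R}^n)$ by a McShane/Whitney-type operator, apply the $bmo(\mathbf{R}^n)$ product estimate, and restrict back using that restriction does not increase either the $BMO^\infty$-seminorm or the $L^1_{\mathrm{ul}}$-norm. The paper's proof is the same in substance, citing \cite[Theorem~13]{GigaGu2} for the H\"older extension and writing the chain of inequalities in one line.
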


This article is organized as follow. In Section 2, we establish several uniform estimates which are essential for our localization argument. In Section 3, we perform the localization argument to do the cut-off to $v$ and get $v_1$. In Section 4, we extend $v_1$ from $\Omega$ to $\mathbf{R}^n$ and prove Theorem \ref{MET} and Theorem \ref{PEU}. Besides, we apply a similar argument to further obtain an extension theorem for $bmo_\delta^\mu(\Omega)$ in the case where $\delta,\mu<\infty$. In Section 5, we give a simple application of our main extension theorem to construct an example regarding the space $BMO_b^{\infty,\infty}(\Omega)$. In Section 6, we update an extension result that is essential in establishing the Helmholtz decomposition of vector fields of $BMO$ in a domain.

\section{Uniform estimates}
We denote $x' := (x_1, x_2, ... , x_{n-1})$ for $x \in \mathbf{R}^n$ and $\nabla' := (\partial_1, \partial_2, ... , \partial_{n-1})$. Since $\Omega$ is a uniformly $C^2$ domain, there exists $r_\ast, \delta_\ast, L_\Gamma > 0$ such that for each $w_0 \in \Gamma$, up to translation and rotation, there exists a function $\psi_{w_0} \in C^{2}(B_{r_\ast}(0'))$ with
\begin{equation} \label{UBP}
\begin{split}
& | \nabla^k \psi_{w_0} | \leq L_\Gamma \; \text{ in } \, B_{r_\ast}(0') \, \text{ for } \, k=0,1,2, \\
& \nabla' \psi_{w_0}(0') = 0', \, \psi_{w_0}(0') = 0
\end{split}
\end{equation}
such that the neighborhood
\[
U_{r_\ast, \delta_\ast, \psi_{w_0}}(w_0) := \{ (x',x_n) \in \mathbf{R}^{n} \, | \, \psi_{w_0}(x') - \delta_\ast < x_n < \psi_{w_0}(x') + \delta_\ast, \, |x'| < r_\ast \}
\]
satisfies
\[
\Omega \cap U_{r_\ast, \delta_\ast, \psi_{w_0}}(w_0) = \{ (x',x_n) \in \mathbf{R}^{n} \, | \, \psi_{w_0}(x') < x_n < \psi_{w_0}(x') + \delta_\ast, \, |x'| < r_\ast \}
\]
and
\[
\partial \Omega \cap U_{r_\ast, \delta_\ast, \psi_{w_0}}(w_0) = \{ (x',x_n) \in \mathbf{R}^{n} \, | \, x_n = \psi_{w_0}(x'), \, |x'| < r_\ast \}.
\]
For simplicity of explanation, we say that $\Omega$ is of type $(r_\ast, \delta_\ast, L_\Gamma)$.
For $x \in \Omega$, let $\pi x$ be a point on $\Gamma$ such that $|x - \pi x| = d_{\Gamma}(x)$. If $x$ is within the reach of $\Gamma$, then this $\pi x$ is unique. There exists $0< \rho_0 < \mathrm{min} \, \{ r_\ast, \delta_\ast, R_0, 1 \}$ such that for any $w_0 \in \Gamma$,
\begin{align} \label{UW}
U_{\rho_0}(w_0) := \{ x \in U_{r_\ast, \delta_\ast, \psi_{w_0}}(w_0) \, | \, (\pi x)' \in B_{\rho_0}(0'), \, |d(x)| < \rho_0 \}
\end{align}
is contained in $U_{r_\ast, \delta_\ast, \psi_{w_0}}(w_0)$. 

We next consider the normal coordinate in $U_{\rho_0}(w_0)$, i.e.,
\begin{eqnarray} \label{NC0}
x = F(\eta) =
\left\{
\begin{array}{lcl}
\eta' + \eta_n \nabla' d ( \eta', \psi_{w_0}(\eta') ); \\
\psi_{w_0}(\eta') + \eta_n \partial_{x_n} d ( \eta', \psi_{w_0}(\eta') )
\end{array}
\right.
\end{eqnarray}
or shortly
\[
	x = \pi x  - d(x) \mathbf{n} (\pi x).
\]
For each $w_0 \in \Gamma$, $F$ is indeed a local $C^1$-diffeomorphism which maps $V_{\rho_0}$ to $U_{\rho_0}(w_0)$ where $V_{\rho_0} := B_{\rho_0}(0') \times (-\rho_0,\rho_0)$. We indeed have that $F \in C^{1}(V_{\rho_0})$ and $(\nabla_\eta F) (0) = I$. Our first uniform control is for the gradient of $F$ with respect to different $w_0 \in \Gamma$. 

\begin{proposition} \label{UCN}
Let $\Omega \subset \mathbf{R}^n$ be a uniformly $C^2$ domain with $n \geq 2$, $\varepsilon \in (0,1)$. Then there exists a constant $c_\Omega^\varepsilon > 0$, depending on $\Omega$, $n$ and $\varepsilon$ only, such that for any $\rho \in (0,c_\Omega^\varepsilon]$ and $w_0 \in \Gamma$,
\begin{align*}
\| \nabla F - I \|_{L^\infty(V_\rho)} &< \varepsilon, \\
\| \nabla F^{-1} - I \|_{L^\infty(U_\rho(w_0))} &< \varepsilon
\end{align*}
hold simultaneously.
\end{proposition}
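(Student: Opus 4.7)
The plan is to use the identity $\nabla F(0) = I$ together with a uniform Lipschitz bound on $\nabla F - I$ on $V_{\rho_0}$ whose constant is independent of $w_0 \in \Gamma$; both inequalities will then follow by shrinking $\rho$. I would start from the coordinate expression
\[
F_i(\eta) = \eta_i + \eta_n (\partial_i d)\bigl(\eta', \psi_{w_0}(\eta')\bigr) \ \ (i<n), \qquad F_n(\eta) = \psi_{w_0}(\eta') + \eta_n (\partial_n d)\bigl(\eta', \psi_{w_0}(\eta')\bigr),
\]
differentiate once in $\eta$, and write $\nabla F = I + A_{w_0}(\eta)$. The entries of $A_{w_0}$ are built from $\nabla' \psi_{w_0}$, $\nabla d$, $\eta_n \nabla^2 d$ and $\eta_n \nabla^2 \psi_{w_0}$ composed with $(\eta', \psi_{w_0}(\eta'))$; since $\nabla' \psi_{w_0}(0') = 0$, $\psi_{w_0}(0') = 0$, and $\nabla d(w_0)$ is the $n$-th axis in the adapted frame, one directly verifies $A_{w_0}(0) = 0$.

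The key step is a uniform Lipschitz bound $|A_{w_0}(\eta)| \le K|\eta|$ on $V_{\rho_0}$ with $K = K(L_\Gamma, R_0, n)$ independent of $w_0$. This uses two ingredients: the bound $\|\nabla^k \psi_{w_0}\|_{L^\infty} \le L_\Gamma$ from (\ref{UBP}), and the standard fact that for a uniformly $C^2$ boundary, the signed distance $d$ lies in $C^2(\Gamma^{R_0})$ with first and second derivatives controlled purely in terms of $L_\Gamma$ and $R_0$. From this one deduces $\|\nabla F - I\|_{L^\infty(V_\rho)} \le K\sqrt{n}\,\rho$ for every $\rho \le \rho_0$, so choosing $\rho$ small enough yields the first inequality.

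For the second inequality I would shrink $\rho$ further to force $\|A_{w_0}\|_{L^\infty(V_\rho)} < 1/2$, whereupon the Neumann series yields $\|(\nabla F)^{-1}\|_{L^\infty(V_\rho)} \le 2$ and consequently $\|(\nabla F)^{-1} - I\|_{L^\infty(V_\rho)} \le 2K\sqrt{n}\,\rho$. From the definitions one checks $\pi F(\eta) = (\eta', \psi_{w_0}(\eta'))$ and $d(F(\eta)) = \eta_n$, so $F$ is a bijection $V_\rho \to U_\rho(w_0)$; the pointwise identity $\nabla F^{-1}(F(\eta)) = (\nabla F(\eta))^{-1}$ then transports the previous estimate to $\|\nabla F^{-1} - I\|_{L^\infty(U_\rho(w_0))} \le 2K\sqrt{n}\,\rho$. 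Setting $\coe := \min\{\rho_0, \varepsilon/(2K\sqrt{n})\}$ makes both bounds hold simultaneously. The main obstacle is the uniformity in $w_0$: one must verify that every derivative bound entering $A_{w_0}$ depends only on the structural constants $(L_\Gamma, R_0)$ of $\Omega$ and not on the particular boundary point, which is exactly where the hypothesis of \emph{uniform} $C^2$ regularity of $\Omega$ is essential.
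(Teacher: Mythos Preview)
Your proposal is correct and follows essentially the same initial strategy as the paper---compute $\nabla F$ entrywise and exploit $\nabla F(0)=I$ together with the uniform $C^2$ bounds (\ref{UBP})---but diverges in how the inverse estimate is obtained. The paper writes out each $\partial_{\eta_j}x_i$ explicitly in terms of $\nabla'\psi_{w_0}$ and $\nabla^2\psi_{w_0}$ (thereby never invoking $\nabla^2 d$ directly), then uses the cofactor formula $A^{-1}=\det(A)^{-1}\operatorname{adj}(A)$ and bounds determinant and adjugate entries one by one; this yields a completely explicit value $c_\Omega^\varepsilon=\min\{\varepsilon/(L_\Gamma((n+1)!)^2 2^{2n+5}),\,\rho_0/2\}$. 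Your Neumann-series route, using $\|(I+A)^{-1}-I\|\le 2\|A\|$ once $\|A\|<1/2$, is cleaner and more conceptual, and the bijection argument $\nabla F^{-1}(F(\eta))=(\nabla F(\eta))^{-1}$ is exactly the right way to transfer the bound to $U_\rho(w_0)$. The trade-off is that the paper's brute-force approach gives an explicit constant usable later in the article, whereas yours leaves $K$ implicit (and relies on the standard but not entirely trivial fact that $d\in C^2(\Gamma^{R_0})$ with \emph{uniform} Hessian bounds, which the paper sidesteps by expressing $\nabla d$ on $\Gamma$ directly via $\psi_{w_0}$).
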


\begin{proof}
Let $0 < \varepsilon < 1$ and fix $w_0 \in \Gamma$, $\rho < \rho_0$. By the mean value theorem together with the upper bound of second order derivatives of $\psi_{w_0}$ in (\ref{UBP}), we deduce that 
\begin{align} \label{FDE}
| \nabla' \psi_{w_0} (\eta') | = | \nabla' \psi_{w_0} (\eta') - \nabla' \psi_{w_0} (0') | \leq \| \nabla^2 \psi_{w_0} \|_{L^\infty(B_\rho(0'))} \cdot | \eta' | \leq L_\Gamma \cdot \rho
\end{align}
for any $|\eta'| < \rho$. Since
\begin{align*}
\partial_{\eta_j} x_i &= \delta_{i,j} + \eta_n \cdot \partial_{\eta_j} (\partial_{x_i} d) \\
&= \delta_{i,j} - \eta_n \cdot \frac{\partial_{\eta_j} \partial_{\eta_i} \psi_{w_0}}{(1+ | \nabla' \psi_{w_0} |^2)^{\frac{1}{2}}} + \eta_n \cdot \frac{\sum_{k=1}^{n-1} \partial_{\eta_i} \psi_{w_0} \cdot \partial_{\eta_k} \psi_{w_0} \cdot \partial_{\eta_j} \partial_{\eta_k} \psi_{w_0}}{(1+ | \nabla' \psi_{w_0} |^2)^{\frac{3}{2}}}
\end{align*}
in $V_\rho$ for $1 \leq i,j \leq n-1$, by estimates (\ref{UBP}) and (\ref{FDE}) we have that
\[
| \partial_{\eta_j} x_i (\eta) - \delta_{i,j} | \leq L_\Gamma \rho + (n-1) \cdot (L_\Gamma \rho)^3
\]
for any $\eta \in V_\rho$. By similar calculations, for $\eta \in V_\rho$ we can also deduce that
\[
| \partial_{\eta_n} x_i (\eta) | \leq L_\Gamma \rho
\]
for each $1\leq i \leq n-1$ and
\begin{align*}
| \partial_{\eta_j} x_n (\eta) | &\leq L_\Gamma \rho + (n-1) \cdot (L_\Gamma \rho)^2 \; \; \; \mathrm{for} \; \; \; 1 \leq j \leq n-1, \\
| \partial_{\eta_n} x_n (\eta) - 1 | &\leq (n-1) \cdot (L_\Gamma \rho)^2.
\end{align*} 

Notice that for an invertible matrix $A$, we have that $A^{-1} = \frac{1}{\mathrm{det} (A)} \cdot \mathrm{adj} (A)$ where $\mathrm{adj} (A)$ denotes the adjugate of matrix $A$. Since we have obtained estimates for each entry of $\nabla F$, by considering the inverse of $\nabla F$ we can deduce similar estimates for entries of $\nabla F^{-1}$. Denote $c_{L_\Gamma \rho} := L_\Gamma \rho + (n-1) \cdot (L_\Gamma \rho)^2$. Assume that $L_\Gamma \rho < < 1$, then for any $\eta \in V_\rho$,
\[
(1 - c_{L_\Gamma \rho} )^n - n! \cdot c_{L_\Gamma \rho}^2 \cdot (1 + c_{L \rho})^{n-2} \leq | \mathrm{det} (\nabla F) (\eta) | \leq (1 + c_{L_\Gamma \rho} )^n + n! \cdot c_{L_\Gamma \rho}^2 \cdot (1 + c_{L_\Gamma \rho})^{n-2}.
\]
By considering the adjugate of $\nabla F$, in $V_\rho$ we also have that 
\[
(1 - c_{L_\Gamma \rho})^{n-1} - (n-1)! \cdot c_{L_\Gamma \rho}^2 \cdot (1 + c_{L_\Gamma \rho})^{n-3} \leq | \partial_{x_i} \eta_i (\eta) | \leq (1 + c_{L_\Gamma \rho})^{n-1} + (n-1)! \cdot c_{L_\Gamma \rho}^2 \cdot (1 + c_{L_\Gamma \rho})^{n-3}
\]
for every $1 \leq i \leq n$ and
\[
| \partial_{x_j} \eta_i (\eta) | \leq (n-1)! \cdot c_{L_\Gamma \rho} \cdot (1 + c_{L_\Gamma \rho})^{n-2}
\]
for every $ 1 \leq i,j \leq n$ with $i \neq j$. Therefore, if $\rho$ is chosen to be sufficiently small, then for each $w_0 \in \Gamma$ we can have 
\begin{align*}
\| \nabla F - I \|_{L^\infty(V_\rho)} &< \varepsilon, \\
\| \nabla F^{-1} - I \|_{L^\infty(U_\rho(w_0))} &< \varepsilon
\end{align*}
simultaneously. 

Next we determine how small for $\rho$ is enough. It is easy to see that if $\rho < \mathrm{min} \, 
\{\frac{\varepsilon}{2 L_\Gamma}, \frac{1}{(n-1) L_\Gamma} \}$, we have that $\| \nabla F - I \|_{L^\infty(V_\rho)} < \varepsilon$. Suppose further that $c_{L_\Gamma \rho} < 2 L_\Gamma \rho << 1$, then in $V_\rho$ we have that
\[
1 - c_{L_\Gamma \rho} \cdot (n+1)! \cdot 2^n < | \mathrm{det} (\nabla F) (\eta) | < 1 + c_{L_\Gamma \rho} \cdot (n+1)! \cdot 2^n.
\]
Hence if $2 L_\Gamma \rho < \frac{1}{(n+1)! \cdot 2^{n+1}}$, then
\[
1 - c_{L_\Gamma \rho} \cdot (n+1)! \cdot 2^{n+1} < \frac{1}{| \mathrm{det} (\nabla F) (\eta) |} < 1 + c_{L_\Gamma \rho} \cdot (n+1)! \cdot 2^{n+1}
\]
in $V_\rho$. Since
\[
1 - c_{L_\Gamma \rho} \cdot n! \cdot 2^n < | \partial_{x_i} \eta_i (\eta) | \leq 1 + c_{L_\Gamma \rho} \cdot n! \cdot 2^n,
\]
we deduce that
\[
\left| \frac{1}{| \mathrm{det} (\nabla F) (\eta) |} \cdot \partial_{x_i} \eta_i (\eta) - 1 \right| < c_{L_\Gamma \rho} \cdot ((n+1)!)^2 \cdot 2^{2n+3}
\]
for every $1 \leq i \leq n$ in $V_\rho$. Similar calculations enable us to also obtain that
\[
\left| \frac{1}{| \mathrm{det} (\nabla F) (\eta) |} \cdot \partial_{x_j} \eta_i (\eta) \right| < c_{L_\Gamma \rho} \cdot n! \cdot 2^{n+1}
\]
for every $1 \leq i,j \leq n$ with $i \neq j$ in $V_\rho$. 

Therefore, if $\rho \leq c_\Omega^\varepsilon := \mathrm{min} \, \{ \frac{\varepsilon}{L_\Gamma \cdot ((n+1)!)^2 \cdot 2^{2n+5}}, \frac{\rho_0}{2} \}$, we indeed have 
\begin{align*}
\| \nabla F - I \|_{L^\infty(V_\rho)} &< \varepsilon, \\
\| \nabla F^{-1} - I \|_{L^\infty(U_\rho(w_0))} &< \varepsilon
\end{align*}
simultaneously. 
\end{proof}

We would like to give a uniform estimate, regardless of $w_0 \in \Gamma$,  on the size of the ball centered at $w_0$ that is contained in $U_\rho(w_0)$.

\begin{proposition} \label{BIU}
Let $\varepsilon \in (0,1)$. If $\rho < \mathrm{min} \, \{ \frac{\varepsilon}{8 L_\Gamma}, \rho_0 \}$, then
\[
B_{\rho (1 - \frac{\varepsilon}{2})}(w_0) \subset U_\rho(w_0)
\]
for any $w_0 \in \Gamma$.
\end{proposition}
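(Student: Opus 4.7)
The plan is to unwind the definition of $U_\rho(w_0)$ into three separate requirements on a point $x\in B_{\rho(1-\varepsilon/2)}(w_0)$: (i) $x\in U_{r_\ast,\delta_\ast,\psi_{w_0}}(w_0)$, (ii) $|d(x)|<\rho$, and (iii) $(\pi x)'\in B_\rho(0')$. After translating and rotating so that $w_0=0$ and $\Gamma$ is locally the graph $x_n=\psi_{w_0}(x')$ with $\psi_{w_0}(0')=0$ and $\nabla'\psi_{w_0}(0')=0'$, conditions (i) and (ii) are routine. For (ii), the signed distance is $1$-Lipschitz with $d(w_0)=0$, so $|d(x)|\le|x-w_0|<\rho$. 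For (i), the bound $|x'|\le|x|<\rho\le\rho_0<r_\ast$ handles the horizontal part, and Taylor expanding $\psi_{w_0}$ at $0'$ with the help of (\ref{UBP}) gives $|\psi_{w_0}(x')|\le L_\Gamma|x'|^2/2$, so $|x_n-\psi_{w_0}(x')|$ is controlled by a small multiple of $\rho$ and is in particular strictly smaller than $\delta_\ast$ for $\rho$ in the admissible range.

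The substantive point is (iii). Writing $\pi x=(y',\psi_{w_0}(y'))$, the orthogonality of $x-\pi x$ to the tangent plane of $\Gamma$ at $\pi x$, which is spanned by the vectors $(e_i,\partial_i\psi_{w_0}(y'))$ for $i=1,\dots,n-1$, yields
\[
y_i=x_i+\bigl(x_n-\psi_{w_0}(y')\bigr)\,\partial_i\psi_{w_0}(y'),\qquad i=1,\dots,n-1,
\]
and therefore
\[
|y'|\le|x'|+|x_n-\psi_{w_0}(y')|\cdot|\nabla'\psi_{w_0}(y')|.
\]
Combining this with the minimality bound $|x-\pi x|\le|x-w_0|=|x|$ and the gradient estimate $|\nabla'\psi_{w_0}(\eta')|\le L_\Gamma|\eta'|$ already established in (\ref{FDE}) gives the key inequality $|y'|\le|x|\bigl(1+L_\Gamma|y'|\bigr)$. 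If one knew a priori that $|y'|\le\rho$, then the hypotheses $|x|<\rho(1-\varepsilon/2)$ and $L_\Gamma\rho<\varepsilon/8$ would force $|y'|\le\rho(1-\varepsilon/2)(1+\varepsilon/8)<\rho$, closing the loop.

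The main obstacle is the mild circularity in the previous paragraph: the estimate on $|\nabla'\psi_{w_0}(y')|$ is only valid while $y'$ stays in the domain where $\psi_{w_0}$ is controlled by its second-derivative bound, namely $|y'|\le\rho$, yet that is what we want to conclude. I would remove the circularity by a continuity argument along the segment $tx$, $t\in[0,1]$. Since each $tx$ lies well within the reach of $\Gamma$, the foot of perpendicular $\pi(tx)$ depends continuously on $t$; writing $y'(t)$ for its first $n-1$ coordinates one has $y'(0)=0'$ and $t\mapsto y'(t)$ continuous. If $|y'(1)|\ge\rho$, then by continuity there is a smallest $t^\ast\in(0,1]$ with $|y'(t^\ast)|=\rho$; applying the displayed inequality at $t^\ast$ produces $\rho=|y'(t^\ast)|<\rho$, a contradiction. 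Hence $|y'(1)|<\rho$, which is (iii), and the proposition follows.
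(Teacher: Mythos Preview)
Your argument is correct and takes a genuinely different route from the paper's. The paper argues by contrapositive: if $B_r(w_0)$ with $r\le\rho$ meets $U_\rho(w_0)^{\mathrm c}$, then since $|d(\cdot)|<\rho$ is automatic on $B_\rho(w_0)$, the ball must contain a point $x$ on the \emph{lateral} part of $\partial U_\rho(w_0)$, namely $x=(\eta',\psi_{w_0}(\eta'))+\tau\,\nabla d(\eta',\psi_{w_0}(\eta'))$ with $|\eta'|=\rho$ and $|\tau|<\rho$. Expanding $|x-w_0|^2$ directly and bounding the cross term via $|\psi_{w_0}(\eta')|\le\rho^2 L_\Gamma$ and $|\nabla'\psi_{w_0}(\eta')|\le\rho L_\Gamma$ gives $|x-w_0|^2\ge\rho^2+\tau^2-4\tau\rho^2 L_\Gamma>\rho^2(1-\varepsilon/2)$. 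So the paper parametrises the boundary of $U_\rho(w_0)$ in normal coordinates and bounds its distance to $w_0$ from below, whereas you fix $x\in B_{\rho(1-\varepsilon/2)}(w_0)$ and bound $|(\pi x)'|$ from above via the orthogonality relation at the foot of the perpendicular.

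One side remark: the circularity you flag, and then resolve by continuity along $t\mapsto tx$, is milder than you present it. The mean-value estimate $|\nabla'\psi_{w_0}(y')|\le L_\Gamma|y'|$ holds on the full chart $B_{r_\ast}(0')$, not merely on $B_\rho(0')$, since the second-derivative bound in (\ref{UBP}) is valid there. Once you know $\pi x$ sits on the local graph, your inequality $|y'|\le|x|(1+L_\Gamma|y'|)$ simply rearranges to $|y'|\le|x|/(1-L_\Gamma|x|)<\rho(1-\varepsilon/2)/(1-\varepsilon/8)<\rho$, so the continuity argument, while correct, is heavier than necessary. The paper's computation is slightly shorter because it works with an explicit boundary point in normal coordinates rather than the implicit projection formula; your approach is arguably more transparent about which of the three defining conditions of $U_\rho(w_0)$ is the substantive one.
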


\begin{proof}
For a ball $B_r(w_0)$ to be contained in $U_\rho(w_0)$, we must have $r \leq \rho$. If $B_r(w_0)$ intersects ${U_\rho(w_0)}^{\mathrm{c}}$ with some $r \leq \rho$, we can find $x \in B_r(w_0)$ of the form $(\eta',h_{w_0}(\eta')) + \tau \nabla d(\eta',h_{w_0}(\eta'))$ with $|\eta'| = \rho$ and $|\tau| \in [0,\rho)$. Notice that
\[
|x - w_0|^2 = |(\eta',h_{w_0}(\eta'))|^2 + \tau^2 + 2 \tau (\eta',h_{w_0}(\eta')) \cdot \nabla d(\eta',h_{w_0}(\eta')).
\]
By the mean value theorem, we can estimate $|\partial_{\eta_i} h_{w_0} (\eta')|$ by $\rho L_\Gamma$ and $|h_{w_0}(\eta')|$ by $\rho^2 L_\Gamma$ for any $|\eta'| \leq \rho$ and $1 \leq i \leq n-1$.
Thus, we deduce that
\[
|(\eta',h_{w_0}(\eta'))|^2 + \tau^2 + 2 \tau (\eta',h_{w_0}(\eta')) \cdot \nabla d(\eta',h_{w_0}(\eta')) \geq \rho^2 + \tau^2 - 4 \tau \rho^2 L_\Gamma
\]
for any $|\eta'| < \rho$. Since $\rho L_\Gamma < \frac{\varepsilon}{8}$, we have that
\[
|x - w_0| \geq \rho \sqrt{1- \frac{\varepsilon}{2}} > \rho(1 - \frac{\varepsilon}{2})
\]
for any $x$ of the form $(\eta',h_{w_0}(\eta')) + \tau \nabla d(\eta',h_{w_0}(\eta'))$ with $|\eta'| = \rho$ and $|\tau| \in [0,\rho)$. Hence for any $w_0 \in \Gamma$, we have that $B_{\rho(1-\frac{\varepsilon}{2})}(w_0) \subset U_\rho(w_0)$.
\end{proof}

Next, we establish a partition of unity for a small neighborhood of the boundary $\Gamma$ in which not only partition functions but also their gradients are uniformly controlled.

\begin{proposition} \label{LFC}
Let $\Omega \subset \mathbf{R}^n$ be a uniformly $C^2$ domain with $n \geq 2$, $\rho \in (0,\frac{\rho_0}{2})$. There exist a countable family of points in $\Gamma$, say $S := \{ x_i \in \Gamma \, \mid \, i \in \mathbf{N} \}$, and a natural number $N_\ast \in \mathbf{N}$ such that
\[
\Gamma^{2 \rho} = \underset{x_i \in S}{\bigcup} \, U_{2 \rho}(x_i)
\]
and for any $x_j \in S$, there exist at most $N_\ast$ points in $S$, say $\{ x_{j_1}, ... , x_{j_{N_\ast}} \} \subset S$, with
\[
U_{2 \rho}(x_j) \cap U_{2 \rho}(x_{j_l}) \neq \emptyset
\]
for each $1 \leq l \leq N_\ast$.
\end{proposition}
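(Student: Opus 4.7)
The plan is to take $S$ to be a maximal $\rho$-separated subset of $\Gamma$ and then verify the two required properties by comparing the sets $U_{2\rho}(w)$ with Euclidean balls of comparable radius. First I would invoke Zorn's lemma (or a direct inductive exhaustion in the separable metric space $\Gamma$) to choose a subset $S = \{x_i\}_{i \in \mathbf{N}} \subset \Gamma$ that is maximal with respect to the property $|x_i - x_j| \geq \rho$ for all $i \neq j$. Countability is automatic since the open balls $B_{\rho/2}(x_i)$ are pairwise disjoint in $\mathbf{R}^n$, and maximality yields, for every $w \in \Gamma$, some $x_i \in S$ with $|w - x_i| < \rho$.

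For the covering property, fix $x \in \Gamma^{2\rho}$. Since $2\rho < \rho_0 < R_0$, there is a unique nearest boundary point $\pi x \in \Gamma$ with $|x - \pi x| = |d(x)| < 2\rho$. Pick $x_i \in S$ with $|\pi x - x_i| < \rho$ and write $\pi x = (\eta', \psi_{x_i}(\eta'))$ in the normal coordinate chart at $x_i$, so that $\pi x - x_i = (\eta', \psi_{x_i}(\eta'))$ and hence $|\eta'| \leq |\pi x - x_i| < \rho < 2\rho$. Combined with $|d(x)| < 2\rho$, the identity $x = \pi x - d(x)\mathbf{n}(\pi x)$, and the inclusions $2\rho < \rho_0 < \min\{r_\ast, \delta_\ast\}$, this forces $x \in U_{r_\ast, \delta_\ast, \psi_{x_i}}(x_i)$, and therefore $x \in U_{2\rho}(x_i)$ by (\ref{UW}).

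For the finite-multiplicity property, the key estimate is that there exists a constant $C = C(n, L_\Gamma)$, independent of $w \in \Gamma$ and of $\rho < \rho_0/2$, such that $U_{2\rho}(w) \subset B_{C\rho}(w)$. Indeed, for $x \in U_{2\rho}(w)$, applying (\ref{FDE}) and the mean value theorem to $\psi_w$ (using $\psi_w(0') = 0$ and $\nabla' \psi_w(0') = 0'$) gives $|\pi x - w|^2 = |(\pi x)'|^2 + |\psi_w((\pi x)')|^2 \leq (2\rho)^2 + (4 L_\Gamma \rho^2)^2$, while $|x - \pi x| < 2\rho$; the triangle inequality yields $|x - w| \leq C\rho$. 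Consequently, if $U_{2\rho}(x_j) \cap U_{2\rho}(x_k) \neq \emptyset$ for some $x_j, x_k \in S$, then $|x_j - x_k| < 2C\rho$. Since the balls $\{B_{\rho/2}(x_k) \mid x_k \in S\}$ are pairwise disjoint and each of them lies in $B_{2C\rho + \rho/2}(x_j)$, a standard volume count bounds the number of such $x_k$ by $N_\ast := (4C + 1)^n$, which depends only on $n$ and $L_\Gamma$.

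The only real obstacle is bookkeeping: one must check that the constant $C$ above depends only on the type $(r_\ast, \delta_\ast, L_\Gamma)$ of $\Omega$ and not on the specific boundary point $w$. The uniform graph estimates (\ref{UBP})--(\ref{FDE}) already provide exactly this uniformity, so both the covering and the finite-multiplicity statements come out with the desired constants, independent of $\rho$.
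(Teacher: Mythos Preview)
Your proof is correct and takes a slightly different but equally standard route from the paper's. The paper selects the centers $x_i$ by choosing one boundary point from each dyadic cube of side $2^{-k_\ast}\le \rho/\sqrt{n}$ that meets $\Gamma$, and then reads off the finite-overlap bound by counting how many such disjoint cubes can fit inside a cube of side $24\rho$, obtaining the explicit value $N_\ast = 24^n\, n^{n/2}$. You instead take a maximal $\rho$-separated subset of $\Gamma$ and run a disjoint-balls volume comparison. Both arguments rest on the same two facts---every $w\in\Gamma$ lies within distance $\rho$ of some $x_i$, and each $U_{2\rho}(w)$ is contained in a Euclidean ball of radius $O(\rho)$ about $w$ (the paper records this as $\sup_{y\in U_{2\rho}(x)}|y-x|<5\rho$, you as $U_{2\rho}(w)\subset B_{C\rho}(w)$)---so the difference is only in how the centers are manufactured and how the packing bound is extracted. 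The dyadic-cube approach ties the construction to the ambient lattice and gives an $N_\ast$ depending only on $n$; your metric-space approach is more intrinsic and would transfer unchanged to an abstract setting, at the cost of an $N_\ast$ that a priori involves $L_\Gamma$ through $C$ (though this can also be absorbed into the choice of $\rho_0$). Neither refinement matters for the subsequent uses of $N_\ast$ in Proposition~\ref{PUG} and Lemma~\ref{BEW}.
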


\begin{proof}
Let $k_\ast \in \mathbf{N}$ be the smallest integer such that $2^{- k_\ast} \leq \frac{\rho}{\sqrt{n}}$. Let $\mathscr{D}$ be the collection of all dyadic cubes of the form
\[
\{ (y_1, ... , y_n) \in \mathbf{R}^n \, \mid \, m_j 2^{- k_\ast} \leq y_j < (m_j +1) 2^{-k_\ast} \},
\]
where $m_j \in \mathbf{Z}$. Since $\mathscr{D}$ covers the whole space $\mathbf{R}^n$, we can pick out the set of dyadic cubes in $\mathscr{D}$ that intersect the boundary $\Gamma$. Let this subset be denoted by $G = \{ Q_i \in \mathscr{D} \, \mid \, i \in \mathbf{N} \}$ and we have that
\[
\Gamma \subset \underset{i \in \mathbf{N}}{\bigcup} \, Q_i.
\]
We choose $x_i \in Q_i \cap \Gamma$ for each $i \in \mathbf{N}$ and set $S$ to be the set of these points. 

This is indeed the set of points we are seeking. For $y \in \Gamma^{2 \rho}$, there exists $y_0 \in \Gamma$ such that $d(y) = |y-y_0|$. As $G$ covers the boundary $\Gamma$, we have that $y_0 \in Q_j$ for some $j \in \mathbf{N}$. Hence $y \in U_{2 \rho}(x_j)$. We have that
\[
\Gamma^{2 \rho} = \underset{x_i \in S}{\bigcup} \, U_{2 \rho}(x_i).
\]
By the mean value theorem, we can deduce that 
\[
\underset{y \in U_{2 \rho}(x)}{\sup} |y-x| < 5 \rho
\]
for every $x \in \Gamma$. We fix $x_i \in S$. For $Q_j \in G$ with $d(Q_j,Q_i) > 10 \rho$, by the triangle inequality we obviously have that
\[
U_{2 \rho}(x_j) \cap U_{2 \rho}(x_i) = \emptyset.
\]
This means that if $U_{2 \rho}(x_j)$ intersects $U_{2 \rho}(x_i)$, we must have that $d(Q_j,Q_i) \leq 10 \rho$. If $d(Q_j,Q_i) \leq 10 \rho$, then
\[
\underset{y \in Q_j, \, x \in Q_i}{\sup} |y-x| < 12 \rho.
\]
Denote $x_{i_c}$ to be the center of the cube $Q_i$. If $U_{2 \rho}(x_j)$ intersects $U_{2 \rho}(x_i)$, we have that $Q_j \subset Q_i^\ast$ where $Q_i^\ast$ is the cube of side-length $24 \rho$ with center $x_{i_c}$. Since elements of $S$ belong to cubes that do not intersect, we can choose $N_\ast$ to be $24^n \cdot n^{\frac{n}{2}}$.
\end{proof}

Based on $\{ U_{\coe}(x_i) \mid x_i \in S \}$, a locally finite open cover of $\Gamma^{\coe}$, our desired partition of unity for $\Gamma^{\coe}$ can be constructed as follow.

\begin{proposition} \label{PUG}
There exist $\varphi_i \in C^1(\Gamma^{\coe})$ for each $i \in \mathbf{N}$ and a constant $C_U$ such that properties
\begin{equation} 
\begin{split}
& 0 \leq \varphi_i \leq 1 \; \, \text{ for any } \; \, i \in \mathbf{N}, \\
& \operatorname{supp} \varphi_i \subset \overline{U_{\coe}(x_i)} \; \, \text{ for any } \; \, i \in \mathbf{N}, \\
& \displaystyle\sum_{i=1}^\infty \, \varphi_i(x) \equiv 1 \; \, \text{ for any } \; \, x \in \Gamma^{\coe}, \\ 
& \sup_{i \in \mathbf{N}} \, \| \nabla \varphi_i \|_{L^\infty(\Gamma^{\coe})} \leq C_U
\end{split}
\end{equation}
hold.
\end{proposition}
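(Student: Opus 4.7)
The plan is a standard partition-of-unity construction adapted to the uniform geometry: build bumps $\phi_i$ subordinate to $\{U_\coe(x_i)\}_{x_i \in S}$ with uniform $C^1$ norms, secure $\sum_j \phi_j \geq 1$ on $\Gamma^\coe$, and set $\varphi_i := \phi_i/\sum_j \phi_j$. The local finiteness from Proposition \ref{LFC} and the uniform coordinate estimates from Proposition \ref{UCN} will transfer uniform bounds from a single model cutoff to every $\phi_i$ and then to every $\varphi_i$.

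Fix a profile $\chi \in C^\infty(\mathbf{R})$ with $0 \leq \chi \leq 1$, $\chi(t) = 1$ for $t \leq 3\coe/4$, $\chi(t) = 0$ for $t \geq 7\coe/8$, and $|\chi'| \leq C'/\coe$. For each $x_i \in S$ I define
\[
\phi_i(x) := \chi\bigl( |(F_i^{-1}(x))'| \bigr) \text{ for } x \in U_\coe(x_i), \qquad \phi_i(x) := 0 \text{ for } x \in \Gamma^\coe \setminus U_\coe(x_i),
\]
where $F_i$ is the normal coordinate chart based at $x_i$ from (\ref{NC0}). The key design choice is that $\phi_i$ depends only on the tangential coordinate, not on the signed distance $d(x)$. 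This matters because $\Gamma^\coe = \{|d(x)| < \coe\}$ has exactly the normal thickness of $U_\coe(x_i)$: the ``caps'' $\{|d(x)| = \coe\}$ of $U_\coe(x_i)$ lie on $\partial \Gamma^\coe$, outside the open set in which we need continuity, so $\phi_i$ need not vanish there; only the lateral face $\{|(F_i^{-1}(x))'| = \coe,\, |d(x)| < \coe\}$ sits inside $\Gamma^\coe$, and the cutoff $\chi$ makes $\phi_i$ vanish there. Thus $\phi_i \in C^1(\Gamma^\coe)$, and Proposition \ref{UCN} yields $\|\nabla \phi_i\|_{L^\infty(\Gamma^\coe)} \leq (1+\varepsilon)\|\chi'\|_\infty$ uniformly in $i$.

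To verify $\sum_j \phi_j \geq 1$ on $\Gamma^\coe$, let $y \in \Gamma^\coe$ and $y_0 := \pi y \in \Gamma$. By the cube construction of $S$ in Proposition \ref{LFC} (applied with parameter $\coe/2$), some $x_j \in S$ satisfies $|y_0 - x_j| \leq \operatorname{diam}(Q_j) \leq \coe/2$. Since $F_j^{-1}(x_j) = 0$ and $F_j^{-1}(y_0)$ has zero normal component, the mean value theorem together with Proposition \ref{UCN} give $|(F_j^{-1}(y_0))'| = |F_j^{-1}(y_0)| \leq (1+\varepsilon)\coe/2 \leq 3\coe/4$ once $\varepsilon$ is fixed small (absorbed into the constant defining $\coe$). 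Hence $\phi_j(y) = 1$, so $\sum_j \phi_j(y) \geq 1$; the local finiteness of Proposition \ref{LFC} simultaneously caps the number of nonzero terms at $N_\ast$.

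Finally, $\varphi_i := \phi_i/\sum_j \phi_j$ inherits the first three pointwise properties, and the quotient rule together with the $N_\ast$-fold overlap bound gives $\|\nabla \varphi_i\|_{L^\infty(\Gamma^\coe)} \leq (1 + N_\ast)\|\nabla \phi_i\|_\infty =: C_U$, uniform in $i$. The only step requiring a genuine design choice, rather than routine calculation, is making each $\phi_i$ independent of the normal direction; this is what lets $\phi_i$ be positive across the full thickness of $\Gamma^\coe$ while remaining $C^1$ with a gradient bound independent of the base point $x_i$.
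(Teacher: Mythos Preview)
Your proof is correct and follows essentially the same route as the paper's: both construct bumps $\phi_i$ depending only on the tangential normal coordinate, show the sum is bounded below by $1$ via the dyadic-cube construction of $S$, and then normalize using the $N_\ast$-fold overlap bound from Proposition~\ref{LFC}. The only differences are cosmetic (your abstract profile $\chi$ with thresholds $3\coe/4$ and $7\coe/8$ versus the paper's explicit $\theta(2|\cdot|/\coe)$), and your choice of a slightly larger inner threshold in fact handles the $(1+\varepsilon)$ factor from Proposition~\ref{UCN} a bit more cleanly than the paper's own write-up.
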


Similar proposition appears in \cite{FKS1}. For the completeness of the theory, we shall provide a proof here.

\begin{proof}[Proof of Proposition \ref{PUG}]
Let us recall an empirical cutoff function that is widely used in various contents, e.g. see \cite[Lemma 2.20 and Lemma 2.21]{JL}.
We consider
\begin{eqnarray*} 
f(t) =
\left\{
\begin{array}{lcl}
\mathrm{exp} ( - \frac{1}{t} ) \quad t > 0, \\
0 \quad \quad \quad \quad \, t \leq 0
\end{array}
\right.
\end{eqnarray*}
and 
\[
\theta(t) := \frac{f(2-t)}{f(t-1) + f(2-t)}
\]
for $t \in \mathbf{R}$.
A simple calculation tells us that $\theta \in C_{\mathrm{c}}^\infty(\mathbf{R})$ with $\theta(t) = 1$ for $|t| \leq 1$ and $\theta(t) = 0$ for $|t| \geq 2$.
For $i \in \mathbf{N}$, we define that
\[
\phi_i(x) := \theta\big( 2 | \big( F^{-1}(x) \big)' | / \coe \big)
\]
for $x \in U_{\coe}(x_i)$ where $F$ in this case is the normal coordinate change between $V_{\coe}$ and $U_{\coe}(x_i)$. By Proposition \ref{LFC}, there exists $S_i := \{ x_{i_1}, x_{i_2}, ... , x_{i_m} \} \subset S$ with $m \leq N_\ast$ and $U_{\coe}(x_{i_l}) \cap U_{\coe}(x_i) \neq \emptyset$ for any $1 \leq l \leq m$. Without loss of generality, we assume that $i_l \neq i$ for each $1 \leq l \leq m$. Then we define $\varphi_i$ in $\Gamma^{\coe}$ by
\begin{eqnarray*} 
\varphi_i(x) :=
\left\{
\begin{array}{lcl}
\frac{\phi_i(x)}{\phi_i(x) + \sum_{l=1}^m \phi_{i_l}(x)} \quad x \in U_{\coe}(x_i), \\
0 \quad \quad \quad \quad \quad x \in \Gamma^{\coe} \setminus U_{\coe}(x_i).
\end{array}
\right.
\end{eqnarray*}

It is trivial to see that $0 \leq \varphi_i \leq 1$ for any $i \in \mathbf{N}$ and
\[
\sum_{i=1}^\infty \varphi_i(x) \equiv 1 \quad \text{in} \quad \Gamma^{\coe}.
\] 
It is sufficient to estimate the gradient of $\varphi_i$. Note that
\[
\partial_j \varphi_i = \frac{\partial_j \phi_i}{\phi_i + \sum_{l=1}^m \phi_{i_l}} - \frac{\phi \cdot ( \partial_j \phi_i + \sum_{l=1}^m \partial_j \phi_{i_l} )}{(\phi + \sum_{l=1}^m \phi_{i_l})^2}.
\]
Let $x \in U_{\coe}(x_i)$ and $\pi x$ be the projection of $x$ in $\Gamma$. By the construction of the set $S$ in the proof of Proposition \ref{LFC}, there exists $x_{i_k} \in S_i$ such that $| \pi x - x_{i_k} | < \frac{\coe}{2}$. This means that $| \big( F^{-1}(x) \big)' | < \frac{\coe}{2}$, i.e., we have that $\phi_{i_k}(x) = 1$. Hence, we deduce that
\[
\phi_i + \sum_{l=1}^m \phi_{i_l} \geq 1 \quad \text{in} \quad U_{\coe}(x_i).
\]
As a result, we have the estimate
\[
| \partial_j \varphi_i | \leq 2 \cdot | \partial_j \phi_i | + \sum_{l=1}^m | \partial_j \phi_{i_l} |.
\]
For any $k \in \mathbf{N}$, we have that
\[
\| \nabla \phi_k \|_{L^\infty(U_{\coe}(x_k))} \leq \frac{C_n}{\rho} \cdot \| \theta' \|_{L^\infty(\mathbf{R})} \cdot \| \nabla F^{-1} \|_{L^\infty(U_{\coe}(x_k))}.
\]
By Proposition \ref{UCN}, we have a uniform estimate for $\| \nabla F^{-1} \|_{L^\infty(U_{\coe}(x_k))}$.
Therefore, combining all estimates together, we finally obtain that
\[
\sup_{i \in \mathbf{N}} \, \| \nabla \varphi_i \|_{L^\infty(\Gamma^{\coe})} \leq \frac{C_{n,N_\ast}}{\rho} \| \theta' \|_{L^\infty(\mathbf{R})}.
\]
\end{proof}

\section{Cut-off}
We consider $v \in bmo_\infty^\infty(\Omega)$. Let $0 < \rho < c_\Omega^\varepsilon/32$ be sufficiently small for which the smallness of $\rho$ will be determined later. For $x \in \Gamma_{\rho_0}^{\mathbf{R}^n}$, we set $\theta_\rho(x) := \theta(d(x)/\rho)$ where $\theta$ is defined in the proof of Proposition \ref{PUG}. Note that $\theta_\rho \in C^2(\mathbf{R}^n)$. We then consider $v_1 := \theta_\rho v$.

\begin{lemma} \label{BNE}
$v_1 \in bmo_\infty^\rho(\Omega)$ satisfies the estimate
\[
\| v_1 \|_{bmo_\infty^\rho(\Omega)} \leq \frac{C}{\rho} \| v \|_{bmo_\infty^\infty(\Omega)}
\]
with $C$ independent of $v$ and $\rho$.
\end{lemma}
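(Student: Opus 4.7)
The plan is to treat the two parts of $\| v_1 \|_{bmo_\infty^\rho(\Omega)} = [v_1]_{BMO^\rho(\Omega)} + [v_1]_{L^1_{\mathrm{ul}}(\Omega)}$ separately. The uniform $L^1$ bound is immediate: since $0 \le \theta_\rho \le 1$, we have $[v_1]_{L^1_{\mathrm{ul}}(\Omega)} \le [v]_{L^1_{\mathrm{ul}}(\Omega)} \le \| v \|_{bmo_\infty^\infty(\Omega)}$. All the work is in bounding $[v_1]_{BMO^\rho(\Omega)}$.

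For the BMO part I would fix a ball $B := B_r(x) \subset \Omega$ with $r < \rho$ and split into two geometric cases. If $B$ is disjoint from the $2\rho$-neighborhood of $\Gamma$, then $\theta_\rho \equiv 0$ on $B$ and the contribution vanishes. Otherwise $d_\Gamma(x) < 2\rho + r < 3\rho$, so the projection $w_0 := \pi x \in \Gamma$ lies within $3\rho$ of $x$ and $B \subset B_{4\rho}(w_0)$. Because $\rho < c_\Omega^\varepsilon/32 \le \rho_0/2$, Propositions \ref{UCN} and \ref{BIU} together ensure that $B$ sits well inside the local chart neighborhood $U_{r_\ast,\delta_\ast,\psi_{w_0}}(w_0)$ provided by the uniformly $C^2$ parametrization $\psi_{w_0}$ at $w_0$.

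I would then introduce the bounded graph-strip
\[
W_\rho := \bigl\{ (y', y_n) \in \mathbf{R}^n : |y'| < r_\ast,\ \psi_{w_0}(y') < y_n < \psi_{w_0}(y') + \delta_\ast \bigr\}
\]
(in the rotated/translated frame based at $w_0$), so that $B \subset W_\rho \subset \Omega$. By the uniform bound $\|\nabla \psi_{w_0}\|_\infty \le L_\Gamma$ in (\ref{UBP}), $W_\rho$ is a bounded Lipschitz domain whose Lipschitz constant depends only on $(L_\Gamma, r_\ast, \delta_\ast, n)$; in particular it is a uniform domain whose Jones constant in (\ref{bEE}) and (\ref{bPE}) is uniform in $w_0$ and $\rho$. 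Applying the product estimate (\ref{bPE}) on $W_\rho$ to the restriction $r_{W_\rho} v$ and the multiplier $\theta_\rho$ yields
\[
\| \theta_\rho \, r_{W_\rho} v \|_{bmo_\infty^\infty(W_\rho)} \le C_J \| \theta_\rho \|_{C^\gamma(\mathbf{R}^n)} \| v \|_{bmo_\infty^\infty(\Omega)}
\]
for any fixed $\gamma \in (0,1)$, using the trivial bound $\| r_{W_\rho} v \|_{bmo_\infty^\infty(W_\rho)} \le \| v \|_{bmo_\infty^\infty(\Omega)}$. A direct computation based on $\nabla \theta_\rho = \rho^{-1} \theta'(d/\rho) \nabla d$ gives $\| \theta_\rho \|_{C^\gamma(\mathbf{R}^n)} \le C \rho^{-\gamma} \le C \rho^{-1}$, the last inequality using $\rho < \rho_0 < 1$. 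Because $B \subset W_\rho$, the oscillation over $B$ is bounded by $[\theta_\rho \, r_{W_\rho} v]_{BMO^\infty(W_\rho)} \le (C/\rho) \| v \|_{bmo_\infty^\infty(\Omega)}$, and taking the supremum over admissible $B$ gives the desired estimate on $[v_1]_{BMO^\rho(\Omega)}$.

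The main obstacle is producing the auxiliary domain $W_\rho$ with a Lipschitz constant independent of $w_0 \in \Gamma$ and simultaneously containing every admissible ball $B$; this is precisely what the uniform estimates of Section 2 are designed to deliver, and it is the step where uniform $C^2$ regularity of $\Gamma$ is essential. Once that geometric setup is in place, the $1/\rho$ scaling comes entirely from the Hölder norm of the cut-off $\theta_\rho$, and no further subtlety arises.
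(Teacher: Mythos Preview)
Your proposal is correct and follows the same overall strategy as the paper: localize each small ball $B$ near $\Gamma$ into a bounded Lipschitz subdomain whose Jones constant is uniform in the base point, apply the product estimate (\ref{bPE}) there, and read off the factor $\rho^{-1}$ from the H\"older norm of $\theta_\rho$. The difference is the choice of localization domain. You take the full chart strip $\Omega \cap U_{r_\ast,\delta_\ast,\psi_{w_0}}(w_0)$, which is uniformly bi-Lipschitz (via $(y',y_n)\mapsto(y',y_n-\psi_{w_0}(y'))$) to the fixed cylinder $B_{r_\ast}(0')\times(0,\delta_\ast)$, so uniformity of $C_J$ is immediate and no normal-coordinate machinery is needed. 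The paper instead works in the $\rho$-scaled domain $F(W_\rho)\subset U_{32\rho}(w_0)\cap\Omega$ built from a fixed $C^2$ model $W$ through the normal coordinate map $F$, and invokes Proposition \ref{BRD} to show that its Lipschitz constant is independent of both $w_0$ and $\rho$. Your route is more elementary for this lemma in isolation; the paper's construction is not wasted, however, since $F(W_\rho)$ and the compatibility with $F$ are reused in Lemmas \ref{CBE} and \ref{EE}, where the transfer to the half-space via normal coordinates is essential. One small correction: your appeal to Propositions \ref{UCN} and \ref{BIU} for the containment $B\subset W_\rho$ is misplaced, as those concern the normal-coordinate neighborhoods $U_\rho(w_0)$; the containment you actually need follows directly from $B\subset B_{4\rho}(w_0)$ together with $\rho\ll\min\{r_\ast,\delta_\ast\}$.
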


Since the domain $\Omega$ is not assumed to be a Jones domain, this lemma cannot be derived by applying the product estimate to $bmo$ functions directly. To establish Lemma \ref{BNE}, we consider a localization argument in which we apply the product estimate to $bmo$ functions locally. For $w_0 \in \Gamma$, we invoke the normal coordinate change $x=F(\eta)$ in $U_{32 \rho}(w_0)$. 
There exists a bounded $C^2$ domain $W$ such that $V_{16} \cap \mathbf{R}_+^n \subset W \subset V_{32} \cap \mathbf{R}_+^n$ and $\partial W \cap \mathbf{R}^{n-1} \times \{0\} = B_{16}(0') \times \{0\}$. Without loss of generality, we assume that $W$ is of type $(\alpha, \beta, L_{\partial W})$ with some constant $L_{\partial W}$.
Let $W_\rho := \{ \rho x \mid x \in W \}$. A simple check tells us that $W_\rho$ is of type $(\alpha \rho, \beta \rho, L_{\partial W}/\rho)$.

\begin{proposition} \label{BRD}
$F(W_\rho)$ is a bounded Lipschitz domain with Lipschitz constant depending on $L_{\partial W}$ only. Moreover, we have that $U_{16 \rho}(w_0) \cap \Omega \subset F(W_\rho) \subset U_{32 \rho}(w_0) \cap \Omega$ and $\partial F(W_\rho) \cap \Gamma = U_{16 \rho}(w_0) \cap \Gamma$.
\end{proposition}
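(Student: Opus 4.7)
My plan for Proposition \ref{BRD} is to handle the set inclusions and the boundary identification directly from the structure of the normal coordinate change, and then to obtain the Lipschitz regularity via perturbation from the model domain $W_\rho$. For the set inclusions the key observation is that by (\ref{NC0}) we have $\eta_n = d(x)$, so $F$ restricts to a $C^1$-diffeomorphism from $V_\rho \cap \mathbf{R}_+^n$ onto $U_\rho(w_0) \cap \Omega$ for every $\rho \leq \rho_0$. Combined with $V_{16 \rho} \cap \mathbf{R}_+^n \subset W_\rho \subset V_{32 \rho} \cap \mathbf{R}_+^n$, this yields $U_{16\rho}(w_0) \cap \Omega \subset F(W_\rho) \subset U_{32\rho}(w_0) \cap \Omega$ at once. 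For the boundary identification, $\partial F(W_\rho) = F(\partial W_\rho)$ since $F$ is a homeomorphism, and $F^{-1}(\Gamma) \cap V_{32\rho} = \{\eta_n = 0\}$. Since $\partial W \cap (\mathbf{R}^{n-1} \times \{0\}) = B_{16}(0') \times \{0\}$ scales to $\partial W_\rho \cap (\mathbf{R}^{n-1} \times \{0\}) = B_{16\rho}(0') \times \{0\}$, applying $F$ gives $\partial F(W_\rho) \cap \Gamma = U_{16\rho}(w_0) \cap \Gamma$.

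For the Lipschitz regularity I will use that the first-order Lipschitz data of $\partial W_\rho$ is scale invariant: if $\partial W$ is locally the graph of a function $g$ with $|\nabla g| \leq L_{\partial W}$, then $\partial W_\rho$ is locally the graph of $g_\rho(\xi') := \rho g(\xi'/\rho)$ with the same bound $|\nabla g_\rho| \leq L_{\partial W}$. (Only the second derivative scales as $1/\rho$, which accounts for the $L_{\partial W}/\rho$ in the stated type of $W_\rho$.) I then fix $\varepsilon \in (0,1)$ depending only on $L_{\partial W}$. Since $32 \rho < c_\Omega^\varepsilon$, Proposition \ref{UCN} yields $\| \nabla F - I \|_{L^\infty(V_{32\rho})} < \varepsilon$ and $\| \nabla F^{-1} - I \|_{L^\infty(U_{32\rho}(w_0))} < \varepsilon$. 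At each point $q \in \partial W_\rho$, after an orthogonal transformation placing the tangent space horizontally, $\partial W_\rho$ is locally the graph of a Lipschitz function with bound $L_{\partial W}$. Composing with $F$ and applying the implicit function theorem to the horizontal projection — invertible because the smallness of $\nabla F - I$ is calibrated against $L_{\partial W}$ — shows that $F(\partial W_\rho)$ is locally a Lipschitz graph with constant depending on $L_{\partial W}$ alone. Boundedness of $F(W_\rho)$ is immediate from boundedness of $W_\rho$ and continuity of $F$.

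The main obstacle will be the uniform control of this Lipschitz constant as $\rho \to 0$. This is made possible precisely by two scale-invariant inputs working together: the first-order Lipschitz bound on $\partial W_\rho$ equals $L_{\partial W}$ independent of $\rho$, and the smallness $\| \nabla F - I \|_{L^\infty(V_{32\rho})} < \varepsilon$ holds uniformly in $w_0 \in \Gamma$ by Proposition \ref{UCN}. Without either of these ingredients the resulting Lipschitz constant would degenerate.
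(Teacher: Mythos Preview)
Your proposal is correct, and your treatment of the set inclusions and the boundary identification is in fact more explicit than the paper's, which dispatches these in one sentence. For the Lipschitz regularity your route differs from the paper's in a useful way. You exploit that the first-order bound on $\partial W_\rho$ is scale invariant (only the second derivative picks up the factor $1/\rho$) and then use solely the gradient estimate $\|\nabla F - I\|<\varepsilon$ from Proposition~\ref{UCN}: conjugating $F$ by the single rotation $R_q$ that flattens $\partial W_\rho$ at $q$, the composite map $\xi\mapsto R_q\bigl(F(q+R_q^{-1}\xi)-F(q)\bigr)$ has derivative within $C_n\varepsilon$ of the identity, and the inverse function theorem applied to the tangential projection of $\xi'\mapsto R_q\bigl(F(q+R_q^{-1}(\xi',h(\xi')))-F(q)\bigr)$ yields a Lipschitz graph with constant $\leq C(1+L_{\partial W})$ once $\varepsilon< c_n/(1+L_{\partial W})$. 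The paper instead introduces \emph{two} rotations --- $R_{\tau_0}$ on the $\eta$-side and a distinct $R_{y_0}$ on the $x$-side, the latter aligning the image of the $\partial W_\rho$-normal with $e_n$ --- and must then control the mismatch $R_{\tau_0,i}-R_{y_0,i}$; this is precisely where the additional zeroth-order estimate $|F(\zeta)-\zeta|\leq C_{L_\Gamma,n}\rho^2$ enters. Your argument bypasses that step entirely. The trade-off is that the paper's computation produces explicit constants and an explicit smallness threshold on $\rho$ (fed later into $c_\Omega^\ast$), while your argument leaves these implicit. One small clarification you should add: specify that ``horizontal'' on the image side means the $R_q$-rotated frame as well (i.e.\ the same rotation on both sides), since without this the phrase ``horizontal projection'' is ambiguous and the invertibility claim could fail in the original $x$-coordinates.
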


\begin{proof}
Since the normal coordinate change $F$ is a $C^1$-diffeomorphism, we see that $F(W_\rho)$ is a bounded domain which satisfies $F(\partial W_\rho) = \partial F(W_\rho)$. 
Let $\tau_0 \in \partial W_\rho$ and $\delta < \mathrm{min} \, \{ \alpha \rho, \beta \rho, \rho \}$. Without loss of generality we may assume that $\delta = c_0 \rho$ for some sufficiently small universal constant $c_0$. Since $\partial W_\rho$ is uniformly $C^2$, there exist a rotation $R_{\tau_0}$ and $h_{\tau_0} \in C^2(B_\delta(0'))$ such that $\widetilde{\eta_0} := R_{\tau_0} (\eta_0 - \tau_0)$ satisfies
\[
(\widetilde{\eta_0})_n = h_{\tau_0}(\widetilde{\eta_0}')
\]
for any $\eta_0 \in \partial W_\rho$ with $|\widetilde{\eta_0}| < \delta$. Let $y_0 := F(\tau_0)$ and $e_{\tau_0}$ to be the unit normal through $\tau_0$ with respect to boundary $\partial W_\rho$. We set $\tau_n := \tau_0 + \delta e_{\tau_0}$ and $y_n := F(\tau_n)$. There exists another rotation matrix $R_{y_0}$ such that $R_{y_0}(y_n - y_0) = \delta e_n$ where $e_n = (0',1)$.
Let $\zeta_0 \in \partial W_\rho$ such that $|\widetilde{\zeta_0}| < \delta$ where $\widetilde{\zeta_0} := R_{\tau_0} (\zeta_0 - \tau_0)$. We set $x_0 := F(\zeta_0)$ and $z_0 := F(\eta_0)$. In the coordinate system centered at $y_0$ with $y_n$ lying on the $n$-axis in the positive direction, the coordinate of $x_0$ becomes $\widetilde{x_0} := R_{y_0}(x_0 - y_0)$ whereas the coordinate of $z_0$ becomes $\widetilde{z_0} := R_{y_0}(z_0 - y_0)$. By applying the mean value theorem, we have that
\[
(\widetilde{x_0})_n - (\widetilde{z_0})_n = R_{y_0,n} \cdot \int_0^1 (\nabla F) (\eta_0 + t (\zeta_0 - \eta_0)) \, dt \cdot R_{\tau_0}^{-1} \cdot (\widetilde{\zeta_0} - \widetilde{\eta_0})
\]
with $R_{y_0,n}$ denoting the $n$-th row of rotation matrix $R_{y_0}$. Since $(\widetilde{\zeta_0})_n - (\widetilde{\eta_0})_n = h_{\tau_0}(\widetilde{\zeta_0}') - h_{\tau_0}(\widetilde{\eta_0}')$, we deduce that
\begin{align} \label{TXZ}
|(\widetilde{x_0})_n - (\widetilde{z_0})_n| \leq \| \nabla F \|_{L^\infty(V_{16 \rho})} \cdot (1 + \| h_{\tau_0} \|_{L^\infty(B_\delta(0'))}) \cdot |\widetilde{\zeta_0}' - \widetilde{\eta_0}'|.
\end{align}
Applying the mean value theorem again to rewrite $\widetilde{\zeta_0} - \widetilde{\eta_0}$ back to $\widetilde{x_0} - \widetilde{z_0}$, for $1 \leq i \leq n-1$ we have that
\begin{align} \label{EBX}
(\widetilde{\zeta_0})_i - (\widetilde{\eta_0})_i = R_{\tau_0,i} \cdot \int_0^1 (\nabla F^{-1}) (z_0 + t (x_0 - z_0)) \, dt \cdot R_{y_0}^{-1} \cdot (\widetilde{x_0} - \widetilde{z_0})
\end{align}
with $R_{\tau_0,i}$ denoting the $i$-th row of rotation matrix $R_{\tau_0}$.

Fix $1 \leq i \leq n-1$. By deducting the identity matrix $I$ from $\nabla F^{-1}$ in (\ref{EBX}) and then adding $I$ back, we have that
\[
| (\widetilde{\zeta_0})_i - (\widetilde{\eta_0})_i | \leq \| \nabla F^{-1} - I \|_{L^\infty(U_{32 \rho}(w_0))} \cdot | \widetilde{x_0} - \widetilde{z_0} | + | R_{\tau_0,i} \cdot R_{y_0}^{-1} \cdot ( \widetilde{x_0} - \widetilde{z_0} ) |.
\]
In the coordinate system centered at $\tau_0$, there exists $\eta_i \in V_{32 \rho}$ such that $R_{\tau_0} (\eta_i - \tau_0) = \delta e_i$ where $e_i$ denotes the vector whose $j$-th entry equals $\delta_{i,j}$ for each $1 \leq j \leq n$. Hence, $R_{\tau_0,i} = \frac{1}{\delta} (\eta_i - \tau_0)$. Similarly, in the coordinate system centered at $y_0$, we can find $y_i \in U_{32 \rho}(w_0)$ such that $R_{y_0,i} = \frac{1}{\delta} (y_i - y_0)$ where $R_{y_0.j}$ denotes the $j$-th row of $R_{y_0}$ for any $1 \leq j \leq n$. Since $R_{y_0}^{-1} = R_{y_0}^{\mathrm{T}}$, we see that
\[
R_{\tau_0,i} \cdot R_{y_0}^{-1} \cdot (\widetilde{x_0} - \widetilde{z_0}) = (R_{\tau_0,i} - R_{y_0,i}) \cdot R_{y_0}^{\mathrm{T}} \cdot (\widetilde{x_0} - \widetilde{z_0}) + (\widetilde{x_0})_i - (\widetilde{z_0})_i.
\]
Focus on the term that involves $(\widetilde{x_0})_n - (\widetilde{z_0})_n$, characterizations of rows of $R_{\tau_0}$ and $R_{y_0}$ say that
\[
\big( (\widetilde{x_0})_n - (\widetilde{z_0})_n \big) \big( (R_{\tau_0,i} - R_{y_0,i}) \cdot R_{y_0,n} \big) = \frac{\big( (\widetilde{x_0})_n - (\widetilde{z_0})_n \big)}{\delta^2} \big( (\eta_i - y_i) - (\tau_0 - y_0) \big) \cdot (y_n - y_0).
\]

For $\zeta \in V_{32 \rho}$, 
\[
F(\zeta) - \zeta = (0', \psi_{w_0}(\zeta') - \zeta_n) + \zeta_n \cdot (\nabla d)(\zeta', \psi_{w_0}(\zeta')).
\]
An easy check gives that
\[
|\zeta_n \cdot (\partial_{x_j} d)(\zeta', \psi_{w_0}(\zeta'))| \leq | \zeta_n \cdot (\partial_{\zeta_j} \psi_{w_0}) (\zeta') | \leq C_{L_\Gamma} \rho^2.
\]
for $1 \leq j \leq n-1$ and
\[
|\psi_{w_0}(\zeta')| + |\zeta_n| \cdot |((\partial_{x_n} d) (\zeta', \psi_{w_0}(\zeta')) - 1)| \leq C_{L_\Gamma,n} \rho^2.
\]
Hence, for any $\zeta \in V_{32 \rho}$, we have the estimate
\[
|F(\zeta) - \zeta| \leq \frac{C_{L_\Gamma,n}}{c_0^2} \delta^2.
\]
By the mean value theorem, we see that
\begin{align} \label{DPE}
|(y_0 - \tau_0) \cdot (y_n - y_0)| \leq |F(\tau_0) - \tau_0| \cdot |F(\tau_n) - F(\tau_0)| \leq \frac{C_{L_\Gamma,n}}{c_0^2} \cdot \| \nabla F \|_{L^\infty(V_{32 \rho})} \cdot \delta^3.
\end{align}
On the other hand,
\[
|(\eta_i - y_i) \cdot (y_n - y_0)| \leq |(\eta_i - \tau_0) \cdot (y_n - y_0)| + |(\tau_0 - y_0) \cdot (y_n - y_0)| + |(y_0 - y_i) \cdot (y_n - y_0)|.
\]
By decomposing $y_n - y_0$ into $(y_n - \tau_n) + (\tau_n - \tau_0) + (\tau_0 - y_0)$ and applying the estimate (\ref{DPE}), we deduce that
\begin{align*}
|(\eta_i - y_i) \cdot (y_n - y_0)| &\leq |(\eta_i - \tau_0) \cdot (y_n - \tau_n)| + |(\eta_i - \tau_0) \cdot (\tau_0 - y_0)| + |(\tau_0 - y_0) \cdot (y_n - y_0)| \\
&\leq \frac{C_{L_\Gamma,n}}{c_0^2} \cdot (2 + \| \nabla F \|_{L^\infty(V_{32 \rho})}) \cdot \delta^3.
\end{align*}
Therefore,
\[
\left| \big( (\widetilde{x_0})_n - (\widetilde{z_0})_n \big) \big( (R_{\tau_0,i} - R_{y_0,i}) \cdot R_{y_0,n} \big) \right| \leq \frac{C_{L_\Gamma,n}}{c_0^2} \cdot (1 + \| \nabla F \|_{L^\infty(V_{32 \rho})}) \cdot \delta \cdot |(\widetilde{x_0})_n - (\widetilde{z_0})_n|.
\]

If $\rho < c_\Omega^\varepsilon/32$, by Proposition \ref{UCN} we see that
\[
|R_{\tau_0,i} \cdot R_{y_0}^{-1} \cdot (\widetilde{x_0} - \widetilde{z_0})| \leq (n+1) \cdot |(\widetilde{x_0})' - (\widetilde{z_0})'| + \frac{C_{L_\Gamma,n}}{c_0^2} \cdot \delta \cdot |(\widetilde{x_0})_n - (\widetilde{z_0})_n|.
\]
Hence,
\[
| (\widetilde{\zeta_0})_i - (\widetilde{\eta_0})_i | \leq (n+2) \cdot |(\widetilde{x_0})' - (\widetilde{z_0})'| + \big( \frac{C_{L_\Gamma,n}}{c_0^2} \cdot \delta + \varepsilon \big) \cdot |(\widetilde{x_0})_n - (\widetilde{z_0})_n|.
\]
Substitute this estimate back to the inequality (\ref{TXZ}), we obtain that
\[
|(\widetilde{x_0})_n - (\widetilde{z_0})_n| \leq C_{n, L_{\partial W}} |(\widetilde{x_0})' - (\widetilde{z_0})'| + 2n(1 + L_{\partial W}) \cdot \big( \frac{C_{L_\Gamma,n}}{c_0^2} \cdot \delta + \varepsilon \big) \cdot |(\widetilde{x_0})_n - (\widetilde{z_0})_n|. 
\]
Therefore, if we take $\varepsilon < \frac{1}{8n(1+L_{\partial W})}$ and $\rho < \mathrm{min} \, \{ \frac{c_0^2}{8n(1+L_{\partial W}) \cdot C_{L_\Gamma,n}}, \frac{c_\Omega^\varepsilon}{32} \}$, then we have that
\[
|(\widetilde{x_0})_n - (\widetilde{z_0})_n| \leq 2C_{n, L_{\partial W}} |(\widetilde{x_0})' - (\widetilde{z_0})'|.
\]
\end{proof}

Based on this proposition, we have the tool to localize the problem and then to apply the product estimate for $bmo$ functions in a bounded domain.

\begin{proof}[Proof of Lemma \ref{BNE}]
Obviously, the estimate $\| v_1 \|_{L^1(B_1(x) \cap \Omega)} \leq \| v \|_{L^1(B_1(x) \cap \Omega)}$ holds for any $x \in \mathbf{R}^n$. It is sufficient to estimate the $BMO^\rho$-seminorm for $v_1$. Let $r \leq \rho$. For $x \in \Omega$ such that $d(x) \geq 3 \rho$, $v_1 \equiv 0$ in $B_r(x)$ as $B_r(x) \subset \Omega \setminus \overline{\Gamma_{2 \rho}^{\mathbf{R}^n}}$, there is nothing to prove in this case. We then consider $x \in \Omega$ with $d(x) < 3 \rho$ and $B_r(x) \subset \Omega$. Let $\pi x$ be the projection of $x$ on $\Gamma$, i.e., $d(x) = |x-\pi x|$. We have that $B_r(x) \subset U_{8 \rho}(\pi x) \cap \Omega$. By Proposition \ref{BRD}, we see that $B_r(x) \subset F(W_\rho) \subset U_{32 \rho}(\pi x) \cap \Omega$ where $F$ in this case is the normal coordinate change between $U_{32 \rho}(\pi x)$ and $V_{32 \rho}$. Since a bounded Lipschitz domain is a uniform (Jones) domain, we can apply the product estimate for $bmo$ functions \cite[Theorem 13]{GigaGu2} in $F(W_\rho)$, i.e., we have that
\[
\frac{1}{|B_r(x)|} \int_{B_r(x)} | v_1(y) - (v_1)_{B_r(x)} | \, dy \leq \| v_1 \|_{bmo_\infty^\infty(F(W_\rho))} \leq C_0 \| \theta_\rho \|_{C^1(F(W_\rho))} \| v \|_{bmo_\infty^\infty(F(W_\rho))}
\]
where $C_0$ depends only on the Lipschitz constant of $\partial F(W_\rho)$, which is universal by Proposition \ref{BRD}. Therefore, we obtain that
\[
[v_1]_{BMO^\rho(\Omega)} \leq \frac{C_0}{\rho} \| v \|_{bmo_\infty^\infty(\Omega)}.
\]
\end{proof}

Next, let us consider further cut-offs induced by the partition of unity for $\Gamma^{2 \rho}$. For $i \in \mathbf{N}$, we set $v_{1,i} := \varphi_i v_1$ where $\varphi_i$ is the cut-off function defined in Proposition \ref{PUG}.

\begin{lemma} \label{CBE}
$v_{1,i} \in bmo_\infty^\rho(\Omega)$ satisfies the estimate
\[
\| v_{1,i} \|_{bmo_\infty^\rho(\Omega)} \leq \frac{C}{\rho} \| v \|_{bmo_\infty^\infty(\Omega)}
\]
with $C$ independent of $v$ and $\rho$.
\end{lemma}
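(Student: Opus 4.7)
The plan is to follow the localization scheme of Lemma~\ref{BNE} verbatim, replacing the single cutoff $\theta_\rho$ by the product $\varphi_i \theta_\rho$, and then to check that the extra partition-of-unity factor does not spoil the $C^1$-bound by more than a constant. The $L^1_{\mathrm{ul}}(\Omega)$ part is immediate: since $0 \leq \varphi_i \leq 1$ by Proposition~\ref{PUG}, for every $x \in \mathbf{R}^n$ I get $\| v_{1,i} \|_{L^1(B_1(x) \cap \Omega)} \leq \| v_1 \|_{L^1(B_1(x) \cap \Omega)} \leq \| v \|_{L^1_{\mathrm{ul}}(\Omega)}$.

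For $[v_{1,i}]_{BMO^\rho(\Omega)}$ I fix $B_r(x) \subset \Omega$ with $r \leq \rho$ and split into two cases. Since $\operatorname{supp} v_{1,i} \subset \overline{U_{\coe}(x_i)} \cap \overline{\Gamma_{2 \rho}^{\mathbf{R}^n}}$, if either $d(x) \geq 3\rho$ or $B_r(x)$ is disjoint from $\overline{U_{\coe}(x_i)}$, then $v_{1,i}$ vanishes on $B_r(x)$ and the oscillation integral is zero. Otherwise $d(x) < 3 \rho$, so I pick $\pi x \in \Gamma$ with $|x - \pi x| = d(x)$ and invoke Proposition~\ref{BRD} around $\pi x$ to obtain a bounded Lipschitz domain $F(W_\rho)$ with $B_r(x) \subset U_{8 \rho}(\pi x) \cap \Omega \subset F(W_\rho) \subset U_{32 \rho}(\pi x) \cap \Omega$, whose Lipschitz constant depends only on $L_{\partial W}$. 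Being bounded Lipschitz, $F(W_\rho)$ is a Jones domain, and since $F(W_\rho) \subset \Gamma^{32\rho} \subset \Gamma^{\coe}$, both $\varphi_i$ and $\theta_\rho$ are defined and $C^1$ on it. Leibniz together with Proposition~\ref{PUG} and the bounds $|\varphi_i|, |\theta_\rho| \leq 1$, $|\nabla \varphi_i|, |\nabla \theta_\rho| \leq C/\rho$ yields $\| \varphi_i \theta_\rho \|_{C^1(F(W_\rho))} \leq C/\rho$. Since $F(W_\rho)$ has diameter $O(\rho) \leq 1$, this $C^1$-bound dominates $\| \varphi_i \theta_\rho \|_{C^\gamma(F(W_\rho))}$ for every $\gamma \in (0,1)$, so the product estimate for $bmo$ on a Jones domain \cite[Theorem~13]{GigaGu2} applies and produces
\[
\frac{1}{|B_r(x)|} \int_{B_r(x)} |v_{1,i}(y) - (v_{1,i})_{B_r(x)}| \, dy \leq C_0 \| \varphi_i \theta_\rho \|_{C^1(F(W_\rho))} \| v \|_{bmo_\infty^\infty(F(W_\rho))} \leq \frac{C}{\rho} \| v \|_{bmo_\infty^\infty(\Omega)}.
\]
Taking the supremum over admissible balls completes the proof.

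The only substantive point, and the one I expect to require the most care, is tracking that the extra factor $\varphi_i$ does not inflate the $C^1$-norm beyond $C/\rho$; this rests on the uniform estimate $\sup_i \| \nabla \varphi_i \|_{L^\infty(\Gamma^{\coe})} \leq C_U$ from Proposition~\ref{PUG}, which is itself a consequence of the uniform control of the normal coordinate change in Proposition~\ref{UCN}. No genuinely new analytic difficulty arises compared with Lemma~\ref{BNE}.
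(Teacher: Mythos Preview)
Your proof is correct and follows the same localization strategy as the paper, with two cosmetic differences. First, the paper centers the bounded Lipschitz domain $F(W_\rho)$ at the partition point $x_i$ rather than at $\pi x$: using $B_r(x)\cap U_{2\rho}(x_i)\neq\emptyset$ together with Proposition~\ref{BIU} it gets $B_r(x)\subset B_{7\rho}(x_i)\subset U_{16\rho}(x_i)\cap\Omega\subset F(W_\rho)$. Second, instead of treating $\varphi_i\theta_\rho$ as a single $C^1$ multiplier acting on $v$, the paper applies the product estimate in $F(W_\rho)$ only to the factor $\varphi_i$ acting on $v_1$, and then controls $\|v_1\|_{bmo_\infty^\infty(F(W_\rho))}=\|v_1\|_{bmo_\infty^\rho(F(W_\rho))}$ by the already-proved Lemma~\ref{BNE}. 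Your one-step variant is slightly more self-contained; the paper's two-step variant makes the dependence on Lemma~\ref{BNE} explicit. Neither choice introduces any new analytic difficulty.
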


\begin{proof}
The estimate $\| v_{1,i} \|_{L^1(B_1(x) \cap \Omega)} \leq \| v \|_{L^1(B_1(x) \cap \Omega)}$ is trivial for any $x \in \mathbf{R}^n$. Let $r \leq \rho$. We only need to consider $x \in \Omega$ such that $d(x) < 3 \rho$, $B_r(x) \subset \Omega$ and $B_r(x) \cap U_{2 \rho}(x_i) \neq \emptyset$. 
Proposition \ref{BIU} ensures that if $\varepsilon < \frac{2}{3}$ and $\rho < \frac{1}{4 L_\Gamma}$, then $B_r(x) \subset B_{7 \rho}(x_i) \cap \Omega \subset U_{16 \rho}(x_i) \cap \Omega \subset F(W_\rho)$ where $F$ in this case is the normal coordinate change that maps $V_{32 \rho}$ to $U_{32 \rho}(x_i)$. Again, by applying the product estimate for $bmo$ functions \cite[Theorem 13]{GigaGu2} in $F(W_\rho)$, we have that
\[
\frac{1}{|B_r(x)|} \int_{B_r(x)} | v_{1,i}(y) - (v_{1,i})_{B_r(x)} | \, dy \leq \| v_{1,i} \|_{bmo_\infty^\infty(F(W_\rho))} \leq C_1 \| \varphi_i \|_{C^1(F(W_\rho))} \| v_1 \|_{bmo_\infty^\infty(F(W_\rho))}
\]
with $C_1$ depending only on the Lipschitz constant of $\partial F(W_\rho)$. Note that $bmo_\infty^\infty(F(W_\rho)) = bmo_\infty^\rho(F(W_\rho))$. 
Since $F(W_\rho) \subset U_{32 \rho}(x_i) \cap \Omega \subset \Gamma^{\coe}$, by Proposition \ref{PUG} and Proposition \ref{BRD} we can deduce that
\[
[v_{1,i}]_{BMO^\rho(\Omega)} \leq \frac{C_1 (1+C_U) (1+C_0)}{\rho} \| v \|_{bmo_\infty^\infty(\Omega)}.
\]
\end{proof}

\section{Extension}
\subsection{Extension to a neighborhood of $\Gamma$}
We are now in a position to extend $v_{1,i}$ with respect to the boundary $\Gamma$ for $i \in \mathbf{N}$. 
Let us recall the extension introduced in \cite{GigaGu3}.
For a function $h$ defined in $\Gamma^{\rho_0} \cap \overline{\Omega}$, let $h^e$ denote the even extension of $h$ with respect to $\Gamma$ to $\Gamma^{\rho_0}$ defined by
\begin{align*}
	h^e \left( \pi x + d(x)\mathbf{n}(\pi x) \right)
	= h \left( \pi x - d(x)\mathbf{n}(\pi x) \right)
	\quad \text{for} \quad x \in \Gamma^{\rho_0} \setminus \overline{\Omega}.
\end{align*}
Let $h^o$ denote the odd extension of $h$ with respect to $\Gamma$ to $\Gamma^{\rho_0}$ defined by
\begin{align*}
	h^o \left( \pi x + d(x)\mathbf{n}(\pi x) \right)
	= - h \left( \pi x - d(x)\mathbf{n}(\pi x) \right)
	\quad \text{for} \quad x \in \Gamma^{\rho_0} \setminus \overline{\Omega}.
\end{align*}

\begin{lemma} \label{EE}
Let $\rho < \frac{c_\Omega^\varepsilon}{32}$.
There exists a constant $C$, independent of $v$ and $\rho$, such that the estimate
\[
[v_{1,i}^e]_{bmo(\mathbf{R}^n)} \leq \frac{C}{\rho^n} \| v \|_{bmo_\infty^\infty(\Omega)}
\]
holds for any $i \in \mathbf{N}$.
\end{lemma}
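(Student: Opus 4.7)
The plan is to decompose the $bmo(\mathbf{R}^n)$ estimate into an $L^1_{\mathrm{ul}}$ bound and a BMO-seminorm bound, with the latter handled by a case analysis on the radius of the test ball. I begin by observing that $\operatorname{supp} v_{1,i}^e \subset \overline{U_{\coe}(x_i)} \cap \overline{\Gamma^{2\rho}}$, whose diameter is bounded by a universal multiple of $\coe$ and in particular by $1$. Pulling back to $\eta$-coordinates via $F$, using the uniform bound $\|\nabla F^{-1}\|_{L^\infty} \leq 1+\varepsilon$ from Proposition \ref{UCN} together with the evenness of $v_{1,i}^e$ across $\Gamma$, yields $\|v_{1,i}^e\|_{L^1(\mathbf{R}^n)} \leq C \|v_{1,i}\|_{L^1(\Omega)} \leq C \|v\|_{L^1_{\mathrm{ul}}(\Omega)}$, and since the support fits in a ball of radius less than $1$ this also dominates $\|v_{1,i}^e\|_{L^1_{\mathrm{ul}}(\mathbf{R}^n)}$.

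For the BMO-seminorm, I consider an arbitrary ball $B_r(y) \subset \mathbf{R}^n$. If $r \geq \rho$, the trivial bound gives mean oscillation at most $2\|v_{1,i}^e\|_{L^1}/|B_r(y)| \leq C\rho^{-n} \|v\|_{L^1_{\mathrm{ul}}(\Omega)}$, and this is precisely where the factor $\rho^{-n}$ in the target estimate originates. If $r < \rho$ and $B_r(y)$ misses $\operatorname{supp} v_{1,i}^e$, the oscillation vanishes. In the remaining case, $r < \rho$ and $B_r(y) \cap \operatorname{supp} v_{1,i}^e \neq \emptyset$, so $d(y,\Gamma) \leq r + 2\rho < 3\rho$; the projection $y_0 := \pi y \in \Gamma$ is then well-defined and Proposition \ref{BIU} yields $B_r(y) \subset B_{4\rho}(y_0) \subset U_{\coe}(y_0)$ provided $\rho$ is sufficiently small, so the normal coordinate chart $F$ centered at $y_0$ covers $B_r(y)$.

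The decisive step is to transfer this small-ball case to the flat setting via $F$. The pulled-back function $w(\eta) := v_{1,i}^e(F(\eta))$ is, by definition of the even extension and the geometry of the normal coordinates, invariant under $\eta_n \mapsto -\eta_n$. I would then invoke the classical reflection principle (as used in \cite{GigaGu3}): the even extension of a BMO function across a hyperplane is BMO with at most a universal multiplicative loss, obtained by splitting a straddling ball into its two halves and exploiting the equality of the half-means. Transporting the resulting estimate back to $x$-coordinates via Proposition \ref{UCN}, and applying Lemma \ref{CBE} on the $\Omega$-side, gives
\[
\frac{1}{|B_r(y)|} \int_{B_r(y)} \bigl| v_{1,i}^e(z) - (v_{1,i}^e)_{B_r(y)} \bigr| \, dz \leq C [v_{1,i}]_{BMO^\rho(\Omega)} \leq \frac{C'}{\rho} \|v\|_{bmo_\infty^\infty(\Omega)},
\]
which is absorbed into the $\rho^{-n}$ bound since $\rho < 1$.

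The main obstacle I anticipate is ensuring that the hyperplane reflection lemma continues to work after the bi-Lipschitz distortion induced by $F$, because $F^{-1}(B_r(y))$ is only approximately a Euclidean ball. Since Proposition \ref{UCN} controls $\|\nabla F^{\pm 1} - I\|_{L^\infty}$ by an arbitrarily small $\varepsilon$, the shape of $F^{-1}(B_r(y))$ differs from a Euclidean ball by a proportion $O(\varepsilon)$, so one can sandwich it between concentric Euclidean $\eta$-balls and reduce to the flat reflection argument with only a constant multiplicative loss; the remaining bookkeeping is routine.
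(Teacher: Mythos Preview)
Your overall architecture matches the paper's: control $\|v_{1,i}^e\|_{L^1}$ directly, handle large balls ($r\ge\rho$) by the trivial $L^1$ bound (this is where the $\rho^{-n}$ appears), and for small balls pull back through the normal chart, use evenness in $\eta_n$, and invoke the half-space reflection lemma. The paper even uses the same sandwiching of $F^{-1}(B_r(y))$ between concentric $\eta$-balls that you describe.

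The gap is in the sentence ``Transporting the resulting estimate back to $x$-coordinates via Proposition \ref{UCN}, and applying Lemma \ref{CBE} on the $\Omega$-side.'' The reflection lemma reduces the oscillation of $w=v_{1,i}^e\circ F$ over a straddling $\eta$-ball to $[v_{1,i}\circ F]_{BMO(V^+)}$, i.e.\ to oscillations over balls $B'=B_{r'}(\zeta)\subset\{\eta_n>0\}$. To connect this to $[v_{1,i}]_{BMO^\rho(\Omega)}$ you must push $B'$ forward and sandwich $F(B')$ inside a Euclidean ball contained in $\Omega$. But when $B'$ nearly touches $\{\eta_n=0\}$ (say $r'<\zeta_n\le(1+\varepsilon)r'$), the enclosing ball $B_{(1+\varepsilon)r'}(F(\zeta))$ protrudes through $\Gamma$, since $d(F(\zeta),\Gamma)=\zeta_n<(1+\varepsilon)r'$. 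So Lemma \ref{CBE}, which only controls oscillations over balls lying in $\Omega$, is not directly applicable, and shrinking $\varepsilon$ does not remove the obstruction---the tangent-ball case $\zeta_n\downarrow r'$ is exactly what the reflection proof needs. Your final paragraph discusses sandwiching $F^{-1}(B_r(y))$ on the $\eta$-side, which is the easy direction; the boundary-touching direction $F(B')$ on the $\Omega$-side is the one that fails.

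The paper plugs precisely this hole by inserting one more ingredient you omit: before pulling back, it applies Jones' extension on the bounded Lipschitz domain $F(W_\rho)$ (whose uniform Lipschitz constant was secured in Proposition \ref{BRD}) to obtain $v_{1,i}^J\in BMO(\mathbf{R}^n)$ with $[v_{1,i}^J]_{BMO(\mathbf{R}^n)}\le C[v_{1,i}]_{BMO^\infty(F(W_\rho))}$. Then for any $B_{r'}(\zeta)\subset V_{16\rho}^+$ one simply writes
\[
\frac{1}{|B_{r'}(\zeta)|}\int_{F(B_{r'}(\zeta))}|v_{1,i}-c|\,dy \le 2^n\cdot\frac{1}{|B_{2r'}(x)|}\int_{B_{2r'}(x)}|v_{1,i}^J-c|\,dy,
\]
with $B_{2r'}(x)$ allowed to exit $\Omega$ because $v_{1,i}^J$ is globally defined. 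This yields $[v_{1,i}\circ F]_{BMO^\infty(V_{16\rho}^+)}\le C[v_{1,i}^J]_{BMO(\mathbf{R}^n)}$, after which the half-space reflection lemma and the reverse change of variables go through exactly as you outline.
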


\begin{proof}
It is trivial to see that
\[
\int_{U_{2 \rho}(x_i)} | v_{1,i}^e | \, dy \leq 2 \| \nabla F \|_{L^\infty(V_{2 \rho})} \cdot \| \nabla F^{-1} \|_{L^\infty(U_{2 \rho}(x_i))} \cdot \int_{U_{2 \rho}(x_i) \cap \Omega} | v_{1,i} | \, dy.
\]
Since $\operatorname{supp} v_{1,i} \subset U_{2 \rho}(x_i)$, $\rho < \frac{c_\Omega^\varepsilon}{32}$ implies that 
\[
\| v_{1,i}^e \|_{L^1(\mathbf{R}^n)} \leq 8 \| v_{1,i} \|_{L^1(B_1(x_i) \cap \Omega)} \leq 8 \| v \|_{bmo_\infty^\infty(\Omega)}. 
\]

Since $F(W_\rho)$ is a bounded Lipschitz domain and $v_{1,i} \in bmo_\infty^\infty(F(W_\rho))$, by the extension theorem for $BMO$ functions \cite{PJ}, there exists $v_{1,i}^J \in BMO(\mathbf{R}^n)$ satisfying $r_{F(W_\rho)} v_{1,i}^J = v_{1,i}$ and
\[
[v_{1,i}^J]_{BMO(\mathbf{R}^n)} \leq C [v_{1,i}]_{BMO^\infty(F(W_\rho))}
\]
where by Proposition \ref{BRD} the constant $C$ depends on $L_{\partial W}$ only. Let $c \in \mathbb{R}^n$ be a constant vector. For $B_r(\zeta) \subset V_{16 \rho}^+$, by change of variable $\eta = F^{-1}(y)$ in $V_{16 \rho} = F^{-1}(U_{16 \rho}(x_i))$, we see that
\[
\frac{1}{|B_r(\zeta)|} \int_{B_r(\zeta)} | v_{1,i} \circ F (\eta) - c | \, d \eta \leq \| \nabla F^{-1} \|_{L^\infty(U_{16 \rho}(x_i))} \cdot \frac{1}{|B_r(\zeta)|} \int_{F(B_r(\zeta))} | v_{1,i}(y) - c | \, dy.
\]
Let $x = F(\zeta)$. By Proposition \ref{UCN}, $\rho < \frac{c_\Omega^\varepsilon}{32}$ implies that $\| \nabla F^{-1} \|_{L^\infty(U_{16 \rho}(x_i))} < 2$ and $F(B_r(\zeta)) \subset B_{2r}(x)$. Thus,
\[
\frac{1}{|B_r(\zeta)|} \int_{F(B_r(\zeta))} | v_{1,i}(y) - c | \, dy \leq 2^n \cdot \frac{1}{|B_{2r}(x)|} \int_{B_{2r}(x)} | v_{1,i}^J (y) -c | \, dy.
\]
By considering an equivalent definition of the $BMO$-seminorm, see e.g. \cite[Proposition 3.1.2]{GraM}, we deduce that
\[
[ v_{1,i} \circ F ]_{BMO^\infty(V_{16 \rho}^+)} \leq C_n [v_{1,i}]_{BMO^\infty(F(W_\rho))} \leq \frac{C_n}{\rho} \| v \|_{bmo_\infty^\infty(\Omega)}.
\]
By recalling the results concerning the even extension of $BMO$ functions in the half space, see \cite[Lemma 3.2]{GigaGu} and \cite[Lemma 3.4]{GigaGu}, we can deduce that
\begin{align} \label{EBE}
[ v_{i,n}^e \circ F ]_{BMO^\infty(V_{8 \rho})} \leq \frac{C_n}{\rho} \| v \|_{bmo_\infty^\infty(\Omega)}.
\end{align}

Let $B_r (x)$ be a ball with radius $r \leq \rho$. If $B_r(x) \cap U_{2 \rho}(x_i) = \emptyset$, there is nothing to prove. It is sufficient to consider $B_r(x)$ that intersects $U_{2 \rho}(x_i)$. 
Proposition \ref{BIU} ensures that if $\varepsilon < \frac{1}{4}$, then $B_r(x) \subset B_{7 \rho}(x_i) \subset U_{8 \rho}(x_i)$. By change of variable $y = F(\eta)$ in $U_{16 \rho}(x_i)$, we have that
\[
\frac{1}{|B_r(x)|} \int_{B_r(x)} | v_{1,i}^e (y) - c | \, dy \leq \| \nabla F \|_{L^\infty(V_{16 \rho})} \cdot \frac{1}{|B_r(x)|} \int_{F^{-1}(B_r(x))} | v_{1,i}^e \circ F (\eta) - c | \, d\eta.
\]
Since $F^{-1}(B_r(x)) \subset B_{2r}(\zeta) \subset B_{8 \rho}(0) \subset V_{8 \rho}$, by (\ref{EBE}) we deduce that
\[
\frac{1}{|B_r(x)|} \int_{B_r(x)} | v_{1,i}^e (y) - (v_{1,i}^e)_{B_r(x)} | \, dy \leq \frac{C_n}{\rho} \| v \|_{bmo_\infty^\infty(\Omega)}.
\]
Thus, we obtain that
\[
[v_{1,i}^e]_{BMO^\rho(\mathbf{R}^n)} \leq \frac{C_n}{\rho} \| v \|_{bmo_\infty^\infty(\Omega)}.
\]

For a ball $B$ with radius $r(B) > \rho$, a simple triangle inequality implies that
\[
\frac{1}{|B|} \int_B | v_{1,i}^e (y) - (v_{1,i}^e)_B | \, dy \leq \frac{2}{|B|} \int_B | v_{1,i}^e (y) | \, dy \leq \frac{C_n}{\rho^n} \| v_{1,i}^e \|_{L^1(\mathbf{R}^n)}.
\]
Therefore, we obtain the $BMO$ estimate for $v_{1,i}^e$, i.e.,
\[
[v_{1,i}^e]_{BMO(\mathbf{R}^n)} \leq \frac{C_n}{\rho^n} \| v \|_{bmo_\infty^\infty(\Omega)}.
\]
\end{proof}

Since $\{ U_{2 \rho}(x_i) \mid x_i \in S \}$ is a locally finite open cover of $\Gamma^{2 \rho}$, we are able to estimate the $bmo$ norm for $v_1^e$.

\begin{lemma} \label{BEW}
$v_1^e \in bmo(\mathbf{R}^n)$ satisfies the estimate
\[
\| v_1^e \|_{bmo(\mathbf{R}^n)} \leq \frac{C}{\rho^n} \| v \|_{bmo_\infty^\infty(\Omega)}
\]
with $C$ independent of $v$ and $\rho$.
\end{lemma}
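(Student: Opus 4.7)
The plan is to decompose $v_1^e$ into the locally finite sum $\sum_i v_{1,i}^e$ provided by the partition of unity, and then combine the per-index bound of Lemma \ref{EE} with the bounded-overlap structure from Proposition \ref{LFC}. Since $\operatorname{supp} v_1 \subset \overline{\Gamma_{2\rho}}$ and $2\rho < \coe$, Proposition \ref{PUG} gives $v_1 = \sum_i \varphi_i v_1 = \sum_i v_{1,i}$, and linearity of the even extension yields $v_1^e = \sum_i v_{1,i}^e$ with each $\operatorname{supp} v_{1,i}^e$ contained in a ball of radius $O(\rho)$ centred at $x_i$ (the estimate $\sup_{y \in U_{2\rho}(x)} |y-x| < 5\rho$ from the proof of Proposition \ref{LFC} together with Proposition \ref{UCN} controls the reflected side). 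Because the points $x_i$ are extracted from disjoint dyadic cubes of side length $\sim \rho$, a direct volume-count shows that for every ball $B_r(z) \subset \mathbf{R}^n$ the index set
\[
I(z,r) := \left\{ i \in \mathbf{N} \, \middle| \, \operatorname{supp} v_{1,i}^e \cap B_r(z) \neq \emptyset \right\}
\]
has cardinality $\leq C_n \max\{1, (r/\rho)^n\}$.

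The $L^1_{\mathrm{ul}}$ estimate is then immediate: for any $x \in \mathbf{R}^n$,
\[
\int_{B_1(x)} |v_1^e| \, dy \leq \sum_{i \in I(x,1)} \| v_{1,i}^e \|_{L^1(\mathbf{R}^n)} \leq \frac{C_n}{\rho^n} \cdot 8 \|v\|_{bmo_\infty^\infty(\Omega)},
\]
using the per-summand bound $\|v_{1,i}^e\|_{L^1(\mathbf{R}^n)} \leq 8 \|v\|_{bmo_\infty^\infty(\Omega)}$ already obtained in the proof of Lemma \ref{EE}. For the $BMO$-seminorm I split on the radius $r$ of the test ball $B_r(y)$. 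When $r \leq \rho$ only $|I(y,r)| \leq C_n$ summands are active, so the triangle inequality together with the extractable bound $[v_{1,i}^e]_{BMO^\rho(\mathbf{R}^n)} \leq C_n \rho^{-1} \|v\|_{bmo_\infty^\infty(\Omega)}$ from the proof of Lemma \ref{EE} yields a mean oscillation of size $C\rho^{-1}\|v\|_{bmo_\infty^\infty(\Omega)}$. When $r > \rho$ I drop the mean and estimate
\[
\frac{1}{|B_r|}\int_{B_r} |v_1^e - (v_1^e)_{B_r}| \, dy \leq \frac{2}{|B_r|} \sum_{i \in I(y,r)} \| v_{1,i}^e \|_{L^1(\mathbf{R}^n)} \leq \frac{C_n (r/\rho)^n \cdot 8 \|v\|_{bmo_\infty^\infty(\Omega)}}{c_n r^n} \leq \frac{C}{\rho^n} \|v\|_{bmo_\infty^\infty(\Omega)}.
\]
Since $\rho \in (0,1)$ we have $\rho^{-1} \leq \rho^{-n}$, so the two regimes combine to give $[v_1^e]_{BMO(\mathbf{R}^n)} \leq C \rho^{-n} \|v\|_{bmo_\infty^\infty(\Omega)}$, and adding the $L^1_{\mathrm{ul}}$ bound completes the proof.

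The only genuinely delicate point is verifying the counting $|I(z,r)| \leq C_n \max\{1,(r/\rho)^n\}$ uniformly in $z \in \mathbf{R}^n$ and in the boundary point $x_i$, since one must ensure that the diameter of $\operatorname{supp} v_{1,i}^e$ is controlled by a multiple of $\rho$ independent of $x_i \in \Gamma$; this is exactly the payoff of the uniform gradient estimate in Proposition \ref{UCN} combined with the dyadic construction in Proposition \ref{LFC}, so no essentially new geometric work is required.
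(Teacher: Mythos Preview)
Your proposal is correct and follows essentially the same approach as the paper: decompose $v_1^e$ via the partition of unity, control small balls ($r\le\rho$) by the bounded-overlap property together with Lemma~\ref{EE}, and handle large balls by dropping the mean and reducing to $L^1$ bounds. The only minor difference is in bookkeeping: the paper first obtains a $\rho$-independent $L^1_{\mathrm{ul}}$ bound (using the bounded overlap of the $U_{2\rho}(x_i)$ rather than just counting them) and then treats large balls by tiling with $\sim (r/\rho)^n$ subcubes of side $\rho$, whereas your direct index-count $|I(z,r)|\le C_n\max\{1,(r/\rho)^n\}$ reaches the same $\rho^{-n}$ endpoint in one step.
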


\begin{proof}
Let $r < \rho$ and consider $B_r(x)$ that intersects $\Gamma^{2 \rho}$. By the construction of $S$ in Proposition \ref{LFC}, there exists $x_{i_0} \in S$ such that $|\pi x - x_{i_0}| < \rho$. Thus, by Proposition \ref{BIU} we have that $B_r(x) \subset B_{5 \rho}(x_{i_0}) \subset U_{6 \rho}(x_{i_0})$ as $\varepsilon < \frac{1}{3}$. 
If $x_j \in S$ such that $U_{2 \rho}(x_j) \cap B_r(x) \neq \emptyset$, then $U_{6 \rho}(x_j) \cap U_{6 \rho}(x_{i_0}) \neq \emptyset$. This means that the number of $x_j \in S$ such that $U_{2 \rho}(x_j) \cap B_r(x) \neq \emptyset$ is smaller than the number of $x_j \in S$ such that $U_{6 \rho}(x_j) \cap U_{6 \rho}(x_{i_0}) \neq \emptyset$. Same proof of Proposition \ref{LFC} also shows that for any $x_k \in S$, the number of $x_j \in S$ such that $U_{6 \rho}(x_j) \cap U_{6 \rho}(x_k) \neq \emptyset$ is smaller than some $N_{\ast,0} \in \mathbf{N}$ independent of $x_k$. Hence, we can find at most $N_{\ast,0}$ points in $S$, say $\{ x_{j_1}, ... , x_{j_{N_{\ast,0}}} \} \subset S$, such that $U_{2 \rho}(x_{j_l}) \cap B_r(x) \neq \emptyset$ for each $1 \leq l \leq N_{\ast,0}$.

The $L^1$ norm of $v_1^e$ in $B_r(x)$ is estimated as
\[
\| v_1^e \|_{L^1(B_r(x))} \leq \sum_{l = 1}^{N_{\ast,0}} \| v_{1, j_l}^e \|_{L^1(B_r(x) \cap U_{2 \rho}(x_{j_l}))} \leq 8 N_{\ast,0} \| v \|_{bmo_\infty^\infty(\Omega)}.
\]
Since this estimate holds regardless of $x \in \mathbf{R}^n$, we obtain that
\[
\| v_1^e \|_{L_{\mathrm{ul}}^1(\mathbf{R}^n)} \leq 8 N_{\ast,0} \| v \|_{bmo_\infty^\infty(\Omega)}.
\]
Since 
\[
r_{B_r(x)} v_1^e = \sum_{l=1}^{N_{\ast,0}} r_{B_r(x)} v_{1, j_l}^e,
\]
we have that
\[
\frac{1}{|B_r(x)|} \int_{B_r(x)} | v_1^e (y) - (v_1^e)_{B_r(x)} | \, dy \leq \sum_{l=1}^{N_{\ast,0}} \frac{1}{|B_r(x)|} \int_{B_r(x)} | v_{1, j_l}^e (y) - (v_{1, j_l}^e)_{B_r(x)} | \, dy.
\]
By Lemma \ref{EE}, we deduce that
\[
[v_1^e]_{BMO^\rho(\mathbf{R}^n)} \leq \frac{N_{\ast,0} C_n}{\rho} \| v \|_{bmo_\infty^\infty(\Omega)}.
\]

Let $B$ be a ball in $\mathbf{R}^n$ with radius $r(B) > \rho$. By the triangle inequality,
\[
\frac{1}{|B|} \int_B | v_1^e (y) - (v_1^e)_B | \, dy \leq \frac{2}{|B|} \int_B | v_1^e (y) | \, dy
\]
Let $M \in \mathbb{N}$ be the largest integer such that $M \rho \leq r(B)$. By definition we have that $(M+1) \rho> r(B)$. Note that the ball $B$ is contained in a cube $Q$ of side length $(M+1) \rho$ which shares the same center as $B$. Separating each side of $Q$ equally into $M+1$ parts, we can divide $Q$ equally into $(M+1)^n$ subcubes of side length $\rho$. Hence, we have that
\[
\int_B | v_1^e (y) | \, dy \leq \int_Q | v_1^e (y) | \, dy \leq C_n (M+1)^n \cdot \| v_1^e \|_{L_{\mathrm{ul}}^1(\mathbf{R}^n)}.
\]
Since $r(B) \geq M \rho$, we deduce that
\[
\frac{2}{|B|} \int_B | v_1^e | \, dy \leq \frac{C_n}{\rho^n} \cdot \| v_1^e \|_{L_{\mathrm{ul}}^1(\mathbf{R}^n)}.
\]

Therefore, we finally obtain the estimate
\[
[v_1^e]_{bmo(\mathbf{R}^n)} \leq \frac{N_{\ast,0} C_n}{\rho^n} \| v \|_{bmo_\infty^\infty(\Omega)}.
\]
\end{proof}

\subsection{Extension to $\mathbf{R}^n$}
Let $v_2 := v - v_1$. Note that $\operatorname{supp} v_2 \subset \Omega \setminus \Gamma_\rho$. Let $v_2^{ze}$ denote the zero extension of $v_2$ to $\mathbf{R}^n$, i.e.,
\begin{equation*} 
	v_2^{ze} (x) = \left \{
\begin{array}{r}
	v_2(x) \quad \text{for} \quad x \in \Omega, \\
	0 \quad \text{for} \quad x \notin \Omega. 
\end{array}
	\right.
\end{equation*}
We next estimate the $bmo$ norm of $v_2^{ze}$.

\begin{lemma} \label{IBE}
$v_2^{ze} \in bmo(\mathbf{R}^n)$ satisfies the estimate
\[
\| v_2^{ze} \|_{bmo(\mathbf{R}^n)} \leq \frac{C}{\rho^n} \| v \|_{bmo_\infty^\infty(\Omega)}
\]
with $C$ independent of $v$ and $\rho$.
\end{lemma}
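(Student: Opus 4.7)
The plan is to control the $L^1_{\mathrm{ul}}$-norm and the $BMO$-seminorm of $v_2^{ze}$ separately, leveraging the fact that $v_2^{ze}$ vanishes in a neighborhood of the boundary $\Gamma$. The $L^1_{\mathrm{ul}}$-estimate is immediate from the pointwise bound $|v_2^{ze}| \leq |v| \cdot 1_\Omega$, yielding $\| v_2^{ze} \|_{L^1_{\mathrm{ul}}(\mathbf{R}^n)} \leq \| v \|_{L^1_{\mathrm{ul}}(\Omega)} \leq \| v \|_{bmo_\infty^\infty(\Omega)}$. Next, I would record that $v_2 = v - v_1$ lies in $bmo_\infty^\rho(\Omega)$ with the bound $\| v_2 \|_{bmo_\infty^\rho(\Omega)} \leq (C/\rho) \| v \|_{bmo_\infty^\infty(\Omega)}$, which follows from Lemma \ref{BNE} together with the trivial inclusion $BMO^\infty(\Omega) \subset BMO^\rho(\Omega)$.

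The key geometric observation is that $v_2^{ze}$ vanishes on $\Omega^c \cup \Gamma_\rho$: on $\Omega^c$ this is by construction of the zero extension, and on $\Gamma_\rho$ one has $\theta_\rho \equiv 1$, so $v_2 = (1 - \theta_\rho) v \equiv 0$ there. Consequently $\operatorname{supp}(v_2^{ze}) \subset \{ y \in \Omega \mid d(y) \geq \rho \}$. Since $d$ is $1$-Lipschitz, any ball $B_r(x)$ meeting both $\Omega^c$ and $\operatorname{supp}(v_2^{ze})$ must satisfy $2r \geq \rho$, i.e.\ $r \geq \rho/2$.

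Now fix a ball $B_r(x) \subset \mathbf{R}^n$ with $r \leq \rho$. If $B_r(x) \cap \operatorname{supp}(v_2^{ze}) = \emptyset$, the mean oscillation on $B_r(x)$ vanishes. If $B_r(x) \subset \Omega$, then $v_2^{ze} = v_2$ on $B_r(x)$ and the oscillation is bounded by $[v_2]_{BMO^\rho(\Omega)} \leq (C/\rho) \| v \|_{bmo_\infty^\infty(\Omega)}$. Otherwise $B_r(x)$ meets $\Omega^c$ and the support simultaneously, forcing $r \geq \rho/2$ by the Lipschitz observation; in this sub-case the triangle inequality gives
\[
\frac{1}{|B_r(x)|} \int_{B_r(x)} | v_2^{ze}(y) - (v_2^{ze})_{B_r(x)} | \, dy \leq \frac{2}{|B_r(x)|} \int_{B_r(x)} | v_2^{ze}(y) | \, dy \leq \frac{C_n}{\rho^n} \| v_2^{ze} \|_{L^1_{\mathrm{ul}}(\mathbf{R}^n)},
\]
where the final step uses that $B_r(x)$ is contained in a fixed number of unit balls.

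For balls of radius $r > \rho$, I would repeat the cube-covering argument from the proof of Lemma \ref{BEW}: dominate the mean oscillation by twice the $L^1$-mean, cover $B$ by a cube of side length comparable to $r$ subdivided into subcubes of side $\rho$, and apply the $L^1_{\mathrm{ul}}$-estimate. Combining all cases yields $\| v_2^{ze} \|_{bmo(\mathbf{R}^n)} \leq (C/\rho^n) \| v \|_{bmo_\infty^\infty(\Omega)}$. I do not anticipate a substantive obstacle here, as the nontrivial analytic content was already absorbed into Lemma \ref{BNE} and the large-ball cube-covering argument; the present lemma is essentially a bookkeeping step that exploits the $1$-Lipschitzness of the signed distance to separate the support of $v_2^{ze}$ from $\Omega^c$ on the scale $\rho$.
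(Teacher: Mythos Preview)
Your argument is correct and follows the same skeleton as the paper---control $L^1_{\mathrm{ul}}$ directly, handle small balls via the interior oscillation of $v_2$, and handle large balls by the cube-covering from the proof of Lemma~\ref{BEW}. The one genuine difference is in how you bound the interior oscillation of $v_2$. The paper invokes Lemma~\ref{BEW} to conclude that $v_1 \in bmo_\infty^\infty(\Omega)$ (hence $v_2 \in bmo_\infty^\infty(\Omega)$ with the full $BMO^\infty$-seminorm controlled by $(C/\rho^n)\|v\|_{bmo_\infty^\infty(\Omega)}$), and then splits at radius $\rho/2$ so that any small ball meeting $\operatorname{supp} v_2$ is automatically contained in $\Omega$. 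You instead use only Lemma~\ref{BNE}, which gives merely $[v_2]_{BMO^\rho(\Omega)}$; you compensate by introducing the intermediate sub-case $\rho/2 \leq r \leq \rho$ where the ball meets both $\Omega^c$ and the support, and dispose of it by the $L^1_{\mathrm{ul}}$-bound. Your route is slightly more economical in that it does not rely on Lemma~\ref{BEW} (the even-extension result for $v_1^e$), making Lemma~\ref{IBE} logically independent of the extension machinery; the paper's route has a marginally cleaner two-way case split but imports the heavier input.
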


\begin{proof}
Since $r_\Omega v_1^e = v_1$, Lemma \ref{BEW} implies that $v_1 \in bmo_\infty^\infty(\Omega)$ with the estimate
\[
\| v_1 \|_{bmo_\infty^\infty(\Omega)} \leq \frac{C}{\rho^n} \| v \|_{bmo_\infty^\infty(\Omega)}.
\]
Hence, $v_2 = v - v_1 \in bmo_\infty^\infty(\Omega)$ satisfies the estimate
\[
\| v_2 \|_{bmo_\infty^\infty(\Omega)} \leq \frac{C}{\rho^n} \| v \|_{bmo_\infty^\infty(\Omega)}.
\]

Since $v_2^{ze}$ is the zero extension of $v_2$, the estimate $\| v_2^{ze} \|_{L_{\mathrm{ul}}^1(\mathbf{R}^n)} \leq \| v_2 \|_{L_{\mathrm{ul}}^1(\Omega)}$ is trivial. Let $B \subset \mathbf{R}^n$ be a ball with radius $r(B) \leq \rho/2$. If $B$ intersects $\overline{\Omega \setminus \Gamma_\rho}$, then $B \subset \Omega$. In this case, we naturally have that
\[
\frac{1}{|B|} \int_B | v_2^{ze} (y) - (v_2^{ze})_B | \, dy \leq [v_2]_{BMO^\infty(\Omega)}.
\]
If $B \cap \overline{\Omega \setminus \Gamma_\rho} = \emptyset$, then $v_2^{ze} = 0$ in $B$, there is nothing to prove in this case. Hence, we have the estimate
\[
[v_2^{ze}]_{BMO^{\rho/2}(\mathbf{R}^n)} \leq [v_2]_{BMO^\infty(\Omega)} \leq \frac{C}{\rho^n} \| v \|_{bmo_\infty^\infty(\Omega)}.
\]
Let $B \subset \mathbf{R}^n$ be a ball with radius $r(B) > \rho/2$. By same argument in the proof of Lemma \ref{BEW} that decomposes the smallest cube $Q$ containing $B$ into small subcubes of side-length $\rho/2$, we deduce that
\[
\frac{1}{|B|} \int_B | v_2^{ze} (y) - (v_2^{ze})_B | \, dy \leq \frac{2}{|B|} \int_B | v_2^{ze} (y) | \, dy \leq \frac{C}{\rho^n} \| v_2^{ze} \|_{L_{\mathrm{ul}}^1(\mathbf{R}^n)}.
\]

Therefore, we finally obtain that
\[
\| v_2^{ze} \|_{bmo(\mathbf{R}^n)} \leq \frac{C}{\rho^n} \| v \|_{bmo_\infty^\infty(\Omega)}.
\]
\end{proof}

Up till here, we have gathered enough results to prove our main theorem.

\begin{proof}[Proof of Theorem \ref{MET}]
Let 
\begin{align*}
\varepsilon &< \frac{1}{8n(1 + L_{\partial W})}, \\
c_\Omega^\varepsilon &= \mathrm{min} \, \{ \frac{\varepsilon}{L_\Gamma \cdot ((n+1)!)^2 \cdot 2^{2n+4}}, \; \rho_0 \}, \\ 
c_\Omega^\ast &:= \mathrm{min} \, \{ \frac{c_0^2}{16n(1+L_{\partial W}) \cdot C_{L_\Gamma,n}}, \;\frac{c_\Omega^\varepsilon}{64} \}.
\end{align*}
We set $\widetilde{v} := v_1^e + v_2^{ze}$ and let $\rho < c_\Omega^\ast$. An easy check ensures that $\operatorname{supp} \widetilde{v} \subset \overline{\Omega_{2 \rho}}$ and $r_\Omega \widetilde{v} = v$. By Lemma \ref{BEW} and Lemma \ref{IBE}, we see that $\widetilde{v} = v_1^e + v_2^{ze} \in bmo(\mathbf{R}^n)$ satisfies the estimate
\[
\| \widetilde{v} \|_{bmo(\mathbf{R}^n)} \leq \frac{C}{\rho^n} \| v \|_{bmo_\infty^\infty(\Omega)}.
\]
\end{proof}

The product estimate for $v \in bmo_\infty^\infty(\Omega)$ follows directly from the extension theorem.

\begin{proof}[Proof of Theorem \ref{PEU}]
Let $\gamma \in (0,1)$. By \cite[Theorem 13]{GigaGu2}, we see that for $\varphi \in C^\gamma(\Omega)$, there exists $\widetilde{\varphi} \in C^\gamma(\mathbf{R}^n)$ such that $r_\Omega \widetilde{\varphi} = \varphi$ and 
\[
\| \widetilde{\varphi} \|_{C^\gamma(\mathbf{R}^n)} \leq \| \varphi \|_{C^\gamma(\Omega)}.
\]
Extending $v \in bmo_\infty^\infty(\Omega)$ to $\widetilde{v} \in bmo(\mathbf{R}^n)$ by Theorem \ref{MET}, we naturally have that
\[
\| \varphi v \|_{bmo_\infty^\infty(\Omega)} \leq \| \widetilde{\varphi} \widetilde{v} \|_{bmo(\mathbf{R}^n)} \leq C \| \widetilde{\varphi} \|_{C^\gamma(\mathbf{R}^n)} \| \widetilde{v} \|_{bmo(\mathbf{R}^n)} \leq C \| \varphi \|_{C^\gamma(\Omega)} \| v \|_{bmo_\infty^\infty(\Omega)}.
\]
\end{proof}

By almost the same proof of Theorem \ref{MET}, we are able to further establish an extension theorem for $bmo_\delta^\mu(\Omega)$ with $\delta, \mu < \infty$. We recall that $bmo_\infty^\infty(\Omega) \subset bmo_\delta^\mu(\Omega)$ for arbitrary domain $\Omega$ and $\delta, \mu < \infty$ \cite[Theorem 2]{GigaGu2}.

\begin{theorem} \label{EDM}
Let $\Omega \subset \mathbf{R}^n$ be a uniformly $C^2$ domain with $n \geq 2$ and $\mu, \delta \in (0,\infty)$. There exists $c_\Omega^\ast > 0$ such that for any $\rho \in (0, c_\Omega^\ast)$ and $v \in bmo_\delta^\mu(\Omega)$,  there is an extension $\widetilde{v} \in BMO^\mu(\mathbf{R}^n) \cap L_{\mathrm{ul}}^1(\Gamma^\delta)$ such that
\[
[\widetilde{v}]_{BMO^\mu(\mathbf{R}^n)} + [ \widetilde{v} ]_{L_{\mathrm{ul}}^1(\Gamma^\delta)} \leq \frac{C}{\rho} \| v \|_{bmo_\delta^\mu(\Omega)}
\]
with $C$ independent of $v$ and $\rho$. Moreover, $\operatorname{supp} \widetilde{v} \subset \overline{\Omega_{2 \rho}}$ where
\[
\Omega_{2 \rho} := \{ x \in \mathbf{R}^n \mid d(x, \overline{\Omega}) < 2 \rho \}.
\]
The operator $v \mapsto \widetilde{v}$ is a bounded linear operator.
\end{theorem}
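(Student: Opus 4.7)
The plan is to follow the proof of Theorem \ref{MET} almost verbatim, replacing $bmo_\infty^\infty$ by $bmo_\delta^\mu$ throughout and keeping careful track of how the finiteness of $\mu$ and $\delta$ enters the bookkeeping. I would fix $c_\Omega^\ast$ to be the constant from Theorem \ref{MET} shrunk further so that $32\rho < \min\{\mu,\delta\}$ whenever $\rho < c_\Omega^\ast$, write $v_1 := \theta_\rho v$ and $v_2 := v - v_1$ as before, and set $\widetilde{v} := v_1^e + v_2^{ze}$. The support condition $\operatorname{supp} \widetilde{v} \subset \overline{\Omega_{2\rho}}$ and the inclusion $\overline{\Omega_{2\rho}} \setminus \overline{\Omega} \subset \Gamma^\delta$ are then immediate, as is linearity of $v \mapsto \widetilde{v}$.

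The first step is to reprove Lemma \ref{BNE} in the form $[v_1]_{BMO^\rho(\Omega)} \leq C/\rho \cdot \|v\|_{bmo_\delta^\mu(\Omega)}$. The localization argument goes through unchanged: for every ball $B_r(x) \subset \Omega$ with $r \leq \rho$ meeting $\operatorname{supp} v_1$, the bounded Lipschitz piece $F(W_\rho)$ of Proposition \ref{BRD} has diameter of order $\rho$, is contained in $\Gamma_\delta$ since $32\rho < \delta$, and all ball radii entering the $BMO^\infty(F(W_\rho))$ seminorm are smaller than $\mu$. Hence $r_{F(W_\rho)}v \in bmo_\infty^\infty(F(W_\rho))$ with norm controlled by $\|v\|_{bmo_\delta^\mu(\Omega)}$, and the product estimate (\ref{bPE}) inside $F(W_\rho)$ yields the claimed bound. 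The partition of unity of Proposition \ref{PUG} and the even reflection then give analogues of Lemmas \ref{CBE}, \ref{EE}, and \ref{BEW}, producing $\|v_1^e\|_{L^1_{\mathrm{ul}}(\mathbf{R}^n)} \leq C\|v\|_{bmo_\delta^\mu(\Omega)}$ and $[v_1^e]_{BMO^\rho(\mathbf{R}^n)} \leq C/\rho \cdot \|v\|_{bmo_\delta^\mu(\Omega)}$.

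To upgrade from $BMO^\rho$ to $BMO^\mu$ I would handle balls $B \subset \mathbf{R}^n$ with $\rho \leq r(B) < \mu$ via the triangle inequality and the cube-decomposition of Lemma \ref{BEW}: subdividing the smallest cube containing $B$ into subcubes of side $\rho$ gives $\frac{1}{|B|} \int_B |v_1^e - (v_1^e)_B|\,dy \leq C (\mu/\rho)^n / r(B)^n \cdot \|v_1^e\|_{L^1_{\mathrm{ul}}(\mathbf{R}^n)}$. Using $r(B) \geq \rho$ together with the fact that $\rho < c_\Omega^\ast$ is bounded above by a constant depending only on $\mu$, $\delta$, $\Omega$, and $n$, the factor $1/\rho^n = (1/\rho) \cdot (1/\rho^{n-1})$ is absorbed into a single $C_{\mu,\delta,\Omega,n}/\rho$. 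Combining with the $BMO^\rho$ bound yields $[v_1^e]_{BMO^\mu(\mathbf{R}^n)} \leq C/\rho \cdot \|v\|_{bmo_\delta^\mu(\Omega)}$. The zero extension $v_2^{ze}$ is treated by the same argument as Lemma \ref{IBE}, with balls of radius $> \rho/2$ handled by the same cube decomposition and constant absorption.

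The main obstacle is precisely this final absorption step: a direct application of the $bmo_\infty^\infty$ strategy produces $C/\rho^n$ rather than $C/\rho$, and one must use that $\rho$ is a priori bounded above by a constant depending on $\mu$, $\delta$, $\Omega$, $n$ to recover the cleaner dependence stated in the theorem. Once this is in place, the $BMO^\mu$ and $L^1_{\mathrm{ul}}(\Gamma^\delta)$ bounds combine directly into the desired estimate, since the supports of $v_1^e$ and $v_2^{ze}$ both sit inside $\Gamma^\delta \cup \overline{\Omega}$ and each piece is already uniformly $L^1$ on $\mathbf{R}^n$.
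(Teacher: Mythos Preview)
Your overall strategy --- set $\widetilde v = v_1^e + v_2^{ze}$ and rerun Lemmas \ref{BNE}--\ref{BEW} with $bmo_\infty^\infty$ replaced by $bmo_\delta^\mu$, observing that each local piece $F(W_\rho)$ sits inside $\Gamma_\delta$ and has diameter of order $\rho<\mu$ so that $r_{F(W_\rho)}v\in bmo_\infty^\infty(F(W_\rho))$ with norm dominated by $\|v\|_{bmo_\delta^\mu(\Omega)}$ --- is exactly the paper's.  The only cosmetic difference is that the paper invokes the equivalence of the spaces $bmo_{\delta_1}^{\mu_1}(\Omega)$ and $bmo_{\delta_2}^{\mu_2}(\Omega)$ for finite parameters (\cite[Proposition~1]{GigaGu2}) to assume $\mu,\delta>c_\Omega^\ast$ without loss of generality, whereas you shrink $c_\Omega^\ast$ so that $32\rho<\min\{\mu,\delta\}$; these amount to the same normalization.

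The gap is the absorption step you yourself flag as the main obstacle.  From the cube decomposition on balls of radius in $[\rho,\mu)$ you obtain a bound of order $1/\rho^n$, then write $1/\rho^n=(1/\rho)\cdot(1/\rho^{n-1})$ and claim that $1/\rho^{n-1}$ is dominated by a constant depending only on $\mu,\delta,\Omega,n$ because $\rho<c_\Omega^\ast$ is bounded \emph{above}.  This is backwards: an upper bound on $\rho$ gives no control whatsoever on $1/\rho^{n-1}$, which blows up as $\rho\to 0^+$.  Since the theorem requires $C$ independent of $\rho$ while $\rho$ ranges over all of $(0,c_\Omega^\ast)$, the absorption fails and you have not produced the stated $C/\rho$ bound.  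The paper avoids this entirely by never attempting the $BMO^\rho\to BMO^\mu$ upgrade via cube decomposition: its proof stops at the $BMO^\rho$ level, recording
\[
[\widetilde v]_{BMO^\rho(\mathbf R^n)} + [\widetilde v]_{L^1_{\mathrm{ul}}(\Gamma^\delta)} \le \frac{C}{\rho}\,\|v\|_{bmo_\delta^\mu(\Omega)},
\]
which is precisely what Lemmas \ref{BNE}--\ref{BEW} deliver once the large-ball step (the sole source of the extra $\rho^{1-n}$ in Lemmas \ref{EE} and \ref{BEW}) is omitted.
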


\begin{proof}
By \cite[Proposition 1]{GigaGu2}, we see that the space $bmo_{\delta_1}^{\mu_1}(\Omega)$ and the space $bmo_{\delta_2}^{\mu_2}(\Omega)$ are equivalent for any $0 < \delta_1, \delta_2, \mu_1, \mu_2 < \infty$. Without loss of generality, we may assume that $\mu, \delta > c_\Omega^\ast$ where $c_\Omega^\ast$ is defined in the proof of Theorem \ref{MET}. Let $\rho \in (0, c_\Omega^\ast)$. Follow the proofs of Lemma \ref{BNE}, Lemma \ref{CBE}, Lemma \ref{EE} and Lemma \ref{BEW},  we can deduce that $v_1^e \in BMO^\rho(\mathbf{R}^n) \cap L_{\mathrm{ul}}^1(\Gamma^\delta)$ satisfies the estimate
\[
[v_1^e]_{BMO^\rho(\mathbf{R}^n)} + [ v_1^e ]_{L_{\mathrm{ul}}^1(\Gamma^\delta)} \leq \frac{C}{\rho} \| v \|_{bmo_\delta^\mu(\Omega)}.
\]
Moreover, in this case it is trivial that $v_2^{ze} \in BMO^\rho(\mathbf{R}^n) \cap L_{\mathrm{ul}}^1(\Gamma^\delta)$. Still by setting $\widetilde{v} = v_1^e + v_2^{ze}$, we finally obtain that $\widetilde{v} \in BMO^\rho(\mathbf{R}^n) \cap L_{\mathrm{ul}}^1(\Gamma^\delta)$ satisfies the estimate
\[
[\widetilde{v}]_{BMO^\rho(\mathbf{R}^n)} + [ \widetilde{v} ]_{L_{\mathrm{ul}}^1(\Gamma^\delta)} \leq \frac{C}{\rho} \| v \|_{bmo_\delta^\mu(\Omega)} 
\]
with $C$ independent of $v$ and $\rho$.
\end{proof}

\section{Application of the extension theorem}
As defined in \cite{BG}, \cite{BGMST}, \cite{BGS}, \cite{BGST}, we recall a seminorm that controls the boundary behavior.
For $\nu \in (0,\infty]$, we set
\[
	[f]_{b^\nu} := \sup \left\{ r^{-n} \int_{\Omega\cap B_r(x)} \left| f(y) \right| \, dy \biggm|
	x \in \Gamma,\ 0<r<\nu \right\}.
\]
We define the space 
\[
	BMO^{\mu,\nu}_b(\Omega) := \left\{ f \in BMO^\mu(\Omega) \bigm|
	[f]_{b^\nu} < \infty \right\}
\]
with
\[
\| f \|_{BMO_b^{\mu,\nu}(\Omega)} := [f]_{BMO^\mu(\Omega)} + [f]_{b^\nu}.
\]

Let $\mu_0, \nu_0 < \infty$. In \cite[Example 1]{BGST}, we see that there exist examples in $BMO_b^{\mu_0,\nu_0}$, $BMO_b^{\mu_0,\infty}$ and $BMO_b^{\infty,\nu_0}$. By making use of the extension theorem and the product estimate established in this article, we shall give an example of a function that belongs to $BMO_b^{\infty,\infty}$ but does not belong to $L^\infty$.

We consider the case where the domain $\Omega$ is the half space $\mathbf{R}_+^2$. Let $f =\mathrm{log} \, x_2$ defined in the layer domain $D_L := \{ 0 < x_2 < 1 \}$. For a cube $Q = [a, a+1] \times [b, b+1]$ that intersects $D_L$, we have that
\[
\int_{Q \cap D_L} | \mathrm{log} \, x_2 | \, dx = - \int_0^{b+1} \mathrm{log} \, x_2 \, dx_2 \leq 1.
\]
Hence, we see that $f \in bmo_\infty^\infty(D_L)$. By Theorem \ref{MET}, we can find $\widetilde{f} \in bmo(\mathbf{R}^2)$ such that $r_{D_L} \widetilde{f} = f$ and $\operatorname{supp} \widetilde{f} \subset \{ -1 < x_2 < 2 \}$. Set $\widetilde{g} (x_1, x_2) := \widetilde{f} (x_1, x_2 - 2)$ for any $x = (x_1, x_2) \in \mathbf{R}^2$ and $g := r_{\mathbf{R}_+^2} \widetilde{g}$. Note that $\operatorname{supp} g \subset \{ 1 < x_2 < 4 \}$.

\begin{proposition} \label{III}
$g \in BMO_b^{\infty,\infty}(\mathbf{R}_+^2)$ but $g \notin L^\infty(\mathbf{R}_+^2)$.
\end{proposition}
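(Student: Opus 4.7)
The plan is to treat the three assertions separately, exploiting that the construction places $g$ far from the boundary $\{x_2=0\}$ while keeping the singular behavior visible in the interior.

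\textbf{Unboundedness.} First I would show $g \notin L^\infty(\mathbf{R}_+^2)$ by tracking what the construction does on a strip where nothing is modified. For $(x_1,x_2) \in \mathbf{R}\times(2,3)$ we have $x_2-2 \in (0,1) \subset D_L$, so $\widetilde{g}(x_1,x_2) = \widetilde{f}(x_1,x_2-2) = f(x_1,x_2-2) = \log(x_2-2)$ because the extension $\widetilde{f}$ restricts back to $f$ on $D_L$. Letting $x_2 \to 2^+$ makes $g$ unbounded below.

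\textbf{The $BMO^\infty$ seminorm.} Since $\widetilde{g}$ is just a vertical translate of $\widetilde{f}$ and the $bmo(\mathbf{R}^2)$-norm is translation invariant, $\widetilde{g}\in bmo(\mathbf{R}^2)$. In particular $[\widetilde{g}]_{BMO(\mathbf{R}^2)}<\infty$. For any ball $B_r(x)\subset\mathbf{R}_+^2$, the oscillation of $g = r_{\mathbf{R}_+^2}\widetilde{g}$ over $B_r(x)$ equals the oscillation of $\widetilde{g}$ over the same ball, so $[g]_{BMO^\infty(\mathbf{R}_+^2)} \leq [\widetilde{g}]_{BMO(\mathbf{R}^2)}<\infty$.

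\textbf{The boundary seminorm $[g]_{b^\infty}$.} This is the part that uses the crucial feature of the construction: $\operatorname{supp} g \subset \{1<x_2<4\}$ sits at positive distance from $\Gamma=\{x_2=0\}$. Let $x\in\Gamma$ and $r>0$. When $r\le 1$ the ball $B_r(x)$ does not meet $\operatorname{supp} g$, so the integral vanishes. When $r>1$, the set $B_r(x)\cap\operatorname{supp} g$ is contained in the horizontal strip $\mathbf{R}\times(1,4)$ intersected with $B_r(x)$, which can be covered by at most $C r$ unit balls (fixed vertical width, horizontal extent $\le 2r$). Using $\widetilde{g} \in L^1_{\mathrm{ul}}(\mathbf{R}^2)$, which is part of the $bmo$ norm, I obtain
\[
\int_{B_r(x)\cap\mathbf{R}_+^2} |g(y)|\,dy \leq Cr\,\|\widetilde{g}\|_{L^1_{\mathrm{ul}}(\mathbf{R}^2)},
\]
so $r^{-2}$ times this is $\le C r^{-1}\|\widetilde{g}\|_{L^1_{\mathrm{ul}}(\mathbf{R}^2)} \le C\|\widetilde{g}\|_{L^1_{\mathrm{ul}}(\mathbf{R}^2)}$ for $r>1$. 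Taking the sup in $x,r$ gives $[g]_{b^\infty}<\infty$.

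\textbf{Main obstacle.} There is no genuine obstacle here; the heart of the argument is already in Theorem \ref{MET}, which provides the compactly-supported extension. The only thing to watch is to set up the vertical shift by $2$ precisely so that (i) $\operatorname{supp} g$ is quantitatively separated from $\Gamma$, which is what makes $[g]_{b^\infty}$ finite for \emph{all} $r>0$ (not merely small $r$), and (ii) the unmodified piece $\log(x_2-2)$ survives inside $\mathbf{R}_+^2$ to witness unboundedness.
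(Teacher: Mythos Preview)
Your proof is correct and follows the same overall structure as the paper: both treat the $BMO^\infty$ and $L^\infty$ parts as immediate and focus on $[g]_{b^\infty}$, exploiting $\operatorname{supp} g \subset \{1<x_2<4\}$ to discard $r\le 1$ and then bound large $r$.

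The one genuine difference is in the large-$r$ estimate. The paper first observes that the extension $\widetilde{f}$ produced by Theorem~\ref{MET} can be taken to depend on $x_2$ alone (since $f=\log x_2$ and the construction $\widetilde{v}=v_1^e+v_2^{ze}$ with $v_1=\theta_\rho v$ preserves $x_1$-independence for the layer $D_L$), and then computes the integral explicitly, obtaining the concrete bound $2$. Your covering argument instead uses only that $\widetilde{g}\in L^1_{\mathrm{ul}}(\mathbf{R}^2)$, which is part of the $bmo$ norm furnished by Theorem~\ref{MET}: covering $B_r(x)\cap\{1<x_2<4\}$ by $O(r)$ unit balls gives $\int|g|\le Cr\|\widetilde{g}\|_{L^1_{\mathrm{ul}}}$ and hence $r^{-2}\int|g|\le C$. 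Your route is more robust---it works without knowing the internal structure of the extension operator---while the paper's direct computation yields an explicit constant.
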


\begin{proof}
It is trivial to see that $g \in BMO^\infty(\mathbf{R}_+^2)$ and $g \notin L^\infty(\mathbf{R}_+^2)$. We only need to estimate the $b^\infty$-norm for $g$. Since $\operatorname{supp} g \subset \{ 1 < x_2 < 4 \}$, it is sufficient to estimate
\[
\frac{2}{|Q_r(x)|} \int_{Q_r(x) \cap \mathbf{R}_+^2} |g| \, dy
\]
for $r \geq 1$ and $x = (x_1,0) \in \partial \mathbf{R}_+^2$ where $Q_r(x)$ denotes the square with center $x$ of side-length $2r$. Without loss of generality, we may assume that $g$ is only a function of $x_2$. Hence, a direct calculation shows that
\[
\frac{2}{|Q_r(x)|} \int_{Q_r(x) \cap \mathbf{R}_+^2} |g| \, dy = \frac{1}{2r^2} \int_{x_1 - r}^{x_1 + r} \int_1^r |g| \, dy_2 \, dy_1 \leq 2 \int_0^1 | \mathrm{log} \, z_2 | \, dz_2 \leq 2.
\]
\end{proof}

\begin{remark} \label{EB2}
Let $\phi \in C_{\mathrm{c}}^\infty(B_8(0))$ with $\phi \equiv 1$ in $B_6(0)$, by Proposition \ref{III} we see that $\phi g \in BMO_b^{\infty,\infty}(\mathbf{R}_+^2) \cap L^2(\mathbf{R}_+^2)$ but $\phi g \notin L^\infty(\mathbf{R}_+^2)$.
\end{remark}

\section{Extension of vector fields in $bmo$ in a domain}
Note that Lemma \ref{EE} basically coincide with \cite[Proposition 2]{GigaGu3} in the statement. However, the proof of Lemma \ref{EE} involves the localization argument in this article, which actually improves \cite[Proposition 2]{GigaGu3} in the sense that \cite[Proposition 2]{GigaGu3} holds for any uniformly $C^2$ domain instead of just for bounded domain. Here we provide an update of \cite[Proposition 2]{GigaGu3}. 

We consider the space
\[
vbmo(\Omega) := \{ u \in bmo_\infty^\infty(\Omega) \, | \, [ \nabla d \cdot u ]_{b^\nu} < \infty \}
\] 
equipped with the norm
\[
\| u \|_{vbmo(\Omega)} := \| u \|_{bmo_\infty^\infty(\Omega)} + [ \nabla d \cdot u]_{b^\nu}.
\]
This space is independent of $\nu \in (0,\infty]$.
Let $u \in vbmo(\Omega)$. We set $u_1 = \theta_\rho u$, $u_{1,i} = \varphi_i u_1$.
Let $P u_{1,i}^o := (\nabla d \cdot u_{1,i}^o) \nabla d$ denotes the normal component of $u_{1,i}^o$ whereas $Q u_{1,i}^e := u_{1,i}^e - (\nabla d \cdot u_{1,i}^e) \nabla d$ denotes the tangential component of $u_{1,i}^e$.

\begin{lemma} \label{OEV}
Let $\rho < \frac{c_\Omega^\varepsilon}{48}$.
There exists a constant $C$, independent of $v$ and $\rho$, such that the estimates
\begin{align*}
[P u_{1,i}^o]_{bmo(\mathbf{R}^n)} &\leq \frac{C}{\rho^n} \| u \|_{vbmo(\Omega)}, \\
[\nabla d \cdot P u_{1,i}^o]_{b^\infty(\Gamma)} &\leq \frac{C}{\rho^n} \| u \|_{vbmo(\Omega)}
\end{align*}
hold for any $i \in \mathbf{N}$ and $\nu \in (0,\infty]$.
\end{lemma}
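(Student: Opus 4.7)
The strategy is to follow the template of the proof of Lemma \ref{EE} line by line, replacing only the half-space \emph{even}-extension step by a half-space \emph{odd}-extension estimate applied to the normal scalar component $\nabla d\cdot u_{1,i}$. The extra seminorm $[\nabla d\cdot u]_{b^\nu}$ built into the $vbmo(\Omega)$-norm is precisely the ingredient that is absent in the bare $bmo_\infty^\infty$ setting and that makes the odd reflection of the normal scalar preserve $BMO$ after change of variables.

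First I would bound $\|P u_{1,i}^o\|_{L^1(\mathbf{R}^n)}$ just as in the opening lines of Lemma \ref{EE}: $P$ acts via multiplication by a uniformly bounded tensor $\nabla d\otimes\nabla d$, and the odd reflection costs only a Jacobian close to $1$ by Proposition \ref{UCN}. Then, for a ball $B_r(x)$ with $r\leq\rho$ meeting $U_{2\rho}(x_i)$, Proposition \ref{BIU} again forces $B_r(x)\subset U_{8\rho}(x_i)\subset F(W_\rho)$, so I can localize to the bounded Lipschitz domain $F(W_\rho)$ given by Proposition \ref{BRD}, apply Jones' extension componentwise to $r_{F(W_\rho)}u_{1,i}$, and change variables $y=F(\eta)$. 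In the chart $V_{16\rho}^+$, the scalar $-(\nabla d\cdot u_{1,i})\circ F$ agrees, modulo factors of $\nabla F$ uniformly close to the identity, with the $n$-th coordinate of $u_{1,i}\circ F$, and its odd extension across $\{\eta_n=0\}$ to $V_{16\rho}$ is the pull-back of the scalar normal component of $u_{1,i}^o$. The half-space odd-extension lemma (as in \cite[Lemma 3.2]{GigaGu}, \cite[Lemma 3.4]{GigaGu}) yields
\[
[h^o]_{BMO^\infty(V_{8\rho})}\leq C\bigl([h]_{BMO^\infty(V_{16\rho}^+)}+[h]_{b^\infty(\{\eta_n=0\})}\bigr),
\]
and this is exactly where the $[\nabla d\cdot u]_{b^\nu}$ term enters. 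Re-multiplying by the smooth unit vector $\nabla d$ to recover $P u_{1,i}^o$, and pulling back through $F$ using Proposition \ref{UCN}, gives the $BMO^\rho$ estimate on small balls; the large-ball case $r(B)>\rho$ is then handled with the $L^1$ bound exactly as at the end of Lemma \ref{EE}, producing the $1/\rho^n$ loss.

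For the second inequality, I would first observe that $\nabla d\cdot P u_{1,i}^o=|\nabla d|^2(\nabla d\cdot u_{1,i}^o)=\nabla d\cdot u_{1,i}^o$ throughout $\Gamma^{\rho_0}$, since $|\nabla d|\equiv1$ there. For $x\in\Gamma$ and $r>0$, split $\int_{B_r(x)}|\nabla d\cdot u_{1,i}^o|\,dy$ into its $\Omega$ and $\Omega^{\mathrm{c}}$ halves; by the definition of the odd extension combined with the near-isometric character of the normal coordinate change (Proposition \ref{UCN}) the two halves are comparable, so the whole integral is bounded by a constant times $\int_{B_r(x)\cap\Omega}|\nabla d\cdot u_{1,i}|\,dy$, which in turn is bounded by $C r^n [\nabla d\cdot u]_{b^\nu}$ using the uniform bound on $\varphi_i\theta_\rho$.

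The main obstacle is making sure that the passage between the curved scalar $\nabla d\cdot u_{1,i}$ and the flat coordinate $(u_{1,i}\circ F)_n$ does not introduce any $\rho$-dependent $BMO$ loss beyond the $1/\rho^n$ already present in Lemma \ref{EE}; this is controlled by combining Proposition \ref{UCN} (to keep $\nabla F-I$ and $\nabla F^{-1}-I$ uniformly small) with the uniform bounds on $\varphi_i$ from Proposition \ref{PUG}, so that the odd-extension BMO estimate can be transferred back to $\mathbf{R}^n$ with constants depending only on $n$, $L_\Gamma$, $L_{\partial W}$ and $N_\ast$.
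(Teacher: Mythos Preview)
Your proposal is correct and takes essentially the same approach as the paper: the paper's proof is a single sentence directing the reader to combine \cite[Proposition~2]{GigaGu3} (odd extension of the normal component, requiring the $b^\nu$ seminorm) with the localization argument of Lemma~\ref{EE}, and what you have written is a faithful and accurate expansion of precisely that combination. One minor imprecision: when $B_r(x)$ meets $U_{2\rho}(x_i)$ it need not lie in $F(W_\rho)\subset\Omega$ (the ball may cross $\Gamma$); rather, as in Lemma~\ref{EE}, one first Jones-extends from $F(W_\rho)$ to control $[(\nabla d\cdot u_{1,i})\circ F]_{BMO^\infty(V_{16\rho}^+)}$, applies the half-space odd-extension lemma on $V_{8\rho}$, and then pulls back to $U_{8\rho}(x_i)\supset B_r(x)$.
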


\begin{proof}
Follow the proofs of \cite[Proposition 2]{GigaGu3} and Lemma \ref{EE}, we are done.
\end{proof}

\begin{lemma} \label{OES}
$P u_1^o \in bmo(\mathbf{R}^n)$ satisfies the estimates
\begin{align*}
\| P u_1^o \|_{bmo(\mathbf{R}^n)} &\leq \frac{C}{\rho^n} \| u \|_{vbmo(\Omega)}, \\
[\nabla d \cdot P u_1^o]_{b^\infty(\Gamma)} &\leq \frac{C}{\rho^n} \| u \|_{vbmo(\Omega)}
\end{align*}
with $C$ independent of $u$ and $\rho$.
\end{lemma}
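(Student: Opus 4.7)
The plan is to mimic the argument of Lemma \ref{BEW} almost line by line, using the locally finite cover $\{U_{2\rho}(x_i)\mid x_i\in S\}$ of $\Gamma^{2\rho}$ from Proposition \ref{LFC} together with Lemma \ref{OEV} as the local building block. Since $P u_1^o = \sum_i P u_{1,i}^o$ and each summand is supported in $\overline{U_{2\rho}(x_i)}$, the key observation is that for any ball $B_r(x)$ with $r<\rho$ that meets $\Gamma^{2\rho}$, only a bounded number (at most some universal $N_{\ast,0}$) of the neighborhoods $U_{2\rho}(x_j)$ can intersect $B_r(x)$. This was already proved inside Lemma \ref{BEW} via Proposition \ref{BIU} applied to inflated neighborhoods $U_{6\rho}(\cdot)$, so I can reuse that fact verbatim.

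First I would estimate the $L^1_{\mathrm{ul}}$ norm of $P u_1^o$. For any $x\in\mathbf{R}^n$, I split $\|P u_1^o\|_{L^1(B_r(x))}\le \sum_{l=1}^{N_{\ast,0}} \|P u_{1,j_l}^o\|_{L^1(B_r(x)\cap U_{2\rho}(x_{j_l}))}$ and bound each term via Lemma \ref{OEV} (or directly by the trivial $L^1$ bound analogous to the one appearing in the proof of Lemma \ref{EE} for $v_{1,i}^e$), giving $\|P u_1^o\|_{L^1_{\mathrm{ul}}(\mathbf{R}^n)}\le C \|u\|_{vbmo(\Omega)}$. Next, for the $BMO^\rho$ seminorm, I use the triangle inequality
\[
\frac{1}{|B_r(x)|}\int_{B_r(x)}\bigl|P u_1^o(y)-(P u_1^o)_{B_r(x)}\bigr|\,dy \le \sum_{l=1}^{N_{\ast,0}} \frac{1}{|B_r(x)|}\int_{B_r(x)}\bigl|P u_{1,j_l}^o(y)-(P u_{1,j_l}^o)_{B_r(x)}\bigr|\,dy,
\]
and apply the local estimate from Lemma \ref{OEV} to each term, producing $[P u_1^o]_{BMO^\rho(\mathbf{R}^n)}\le C\rho^{-1}\|u\|_{vbmo(\Omega)}$. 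For balls $B$ of radius $r(B)>\rho$, I use the standard trick from Lemma \ref{BEW}: enclose $B$ in a cube $Q$ of side $(M+1)\rho$, partition $Q$ into $(M+1)^n$ cubes of side $\rho$, and bound $|B|^{-1}\int_B|P u_1^o|$ by $C\rho^{-n}\|P u_1^o\|_{L^1_{\mathrm{ul}}(\mathbf{R}^n)}$. Combining these yields the first inequality.

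For the second estimate on $[\nabla d\cdot P u_1^o]_{b^\infty(\Gamma)}$, I decompose in the same way: for $x\in\Gamma$ and any $r>0$, write $r^{-n}\int_{B_r(x)\cap\Omega}|\nabla d\cdot P u_1^o|\,dy\le \sum_{l=1}^{N_{\ast,0}} r^{-n}\int_{B_r(x)\cap\Omega}|\nabla d\cdot P u_{1,j_l}^o|\,dy$ when $r<\rho$ (using the same finite-overlap argument), and fall back on the $L^1_{\mathrm{ul}}$ bound when $r\ge \rho$, exactly as in the $BMO$ part. Lemma \ref{OEV} provides the local $b^\infty$ bounds; summing and controlling the large-radius case via the uniform $L^1$ estimate on $\nabla d\cdot P u_1^o$ gives $[\nabla d\cdot P u_1^o]_{b^\infty(\Gamma)}\le C\rho^{-n}\|u\|_{vbmo(\Omega)}$.

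The only mildly subtle point is bookkeeping the factor of $\rho^{-n}$ correctly: the local estimate already carries $\rho^{-n}$, so the finite overlap (independent of $\rho$) and the large-ball reduction to $L^1_{\mathrm{ul}}$ both preserve the exponent without deterioration. I do not expect a genuine obstacle here—the proof is essentially a templated repetition of Lemma \ref{BEW}, with the boundary seminorm handled by the same covering plus triangle inequality scheme.
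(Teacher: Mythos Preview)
Your proposal is correct and is exactly what the paper does: its proof of this lemma is the single line ``Follow the proof of Lemma \ref{BEW}, we are done,'' and your write-up spells out precisely that templated repetition with Lemma \ref{OEV} replacing Lemma \ref{EE}. The handling of the $b^\infty$ seminorm via the same finite-overlap covering plus the $L^1_{\mathrm{ul}}$ fallback for large radii is the intended argument.
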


\begin{proof}
Follow the proof of Lemma \ref{BEW}, we are done.
\end{proof}

Similar as in \cite[Proposition 2]{GigaGu3}, we set 
\[
\overline{u_1} := P u_1^o + Q u_1^e.
\]
By Lemma \ref{BEW}, we have that $\overline{u_1} \in bmo(\mathbf{R}^n)$. 
Let $u_2 := u - u_1$ and $u_2^{ze}$ be the zero extension of $u_2$ to $\mathbf{R}^n$.
Since $\overline{u_1}$ coincide with $u_1$ in $\Omega$, following the proof of Lemma \ref{IBE} we can show that $u_2^{ze} \in bmo(\mathbf{R}^n)$ satisfying 
\[
\| u_2^{ze} \|_{bmo(\mathbf{R}^n)} \leq \frac{C}{\rho^n} \| u \|_{vbmo(\Omega)}
\]
with $C$ independent of $u$ and $\rho$. Therefore, by setting $\overline{u} := \overline{u_1} + u_2^{ze}$, we obtain an extension of $u$ whose normal component in a small neighborhood of $\Gamma$ is odd with respect to $\Gamma$ whereas the tangential component in a small neighborhood of $\Gamma$ is even with respect to $\Gamma$. We summarize the extension theorem for a vector field of $bmo$ in a domain as follow.

\begin{theorem} \label{VET}
Let $\Omega \subset \mathbf{R}^n$ be a uniformly $C^2$ domain with $n \geq 2$. There exists $c_\Omega^{\ast \ast} > 0$ such that for any $\rho \in (0,c_\Omega^{\ast \ast})$ and $u \in vbmo(\Omega)$, there is an extension $\overline{u} \in bmo(\mathbf{R}^n)$ such that
\[
\| \overline{u} \|_{bmo(\mathbf{R}^n)} + [\nabla d \cdot \overline{u}]_{b^\infty(\Gamma)} \leq \frac{C}{\rho^n} \| u \|_{vbmo(\Omega)}
\]
with $C$ independent of $u$ and $\rho$. Moreover, $\operatorname{supp} \overline{u} \subset \overline{\Omega_{2 \rho}}$ where
\[
\Omega_{2 \rho} := \{ x \in \mathbf{R}^n \mid d(x, \overline{\Omega}) < 2 \rho \}.
\]
The operator $u \mapsto \overline{u}$ is a bounded linear operator.
\end{theorem}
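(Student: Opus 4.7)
The plan is to verify that the extension $\overline{u} := \overline{u_1} + u_2^{ze}$ already assembled in the discussion preceding the theorem statement satisfies all three claimed properties. Recall the ingredients: $u_1 := \theta_\rho u$ and $u_2 := u - u_1$, with $\overline{u_1} := P u_1^o + Q u_1^e$ defined via the odd extension of the normal component and the even extension of the tangential component, while $u_2^{ze}$ denotes the zero extension of $u_2$ to $\mathbf{R}^n$. Each of $\theta_\rho$-multiplication, $\varphi_i$-multiplication, the even/odd extensions across $\Gamma$, the projections $P$ and $Q$, and zero extension are linear in $u$, so $u \mapsto \overline{u}$ is a linear operator.

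For the $bmo(\mathbf{R}^n)$ bound I would combine three estimates. Lemma \ref{OES} directly controls $\|P u_1^o\|_{bmo(\mathbf{R}^n)}$. The tangential piece $Q u_1^e = u_1^e - (\nabla d \cdot u_1^e)\nabla d$ is handled by applying the scalar estimate of Lemma \ref{BEW} componentwise to $u_1^e$; since $\Omega$ is uniformly $C^2$ the components of $\nabla d$ belong to $C^1(\Gamma^{\rho_0})$ with uniformly bounded norm, so multiplication by them is absorbed either through the product estimate of Theorem \ref{PEU} or through the same localization-and-Jones argument already used to pass from Lemma \ref{BNE} to Lemma \ref{BEW}. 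The remainder $u_2^{ze}$ is bounded exactly as in Lemma \ref{IBE}, since $u_2$ is supported in $\Omega \setminus \Gamma_\rho$. Summing these three contributions yields $\|\overline{u}\|_{bmo(\mathbf{R}^n)} \leq (C/\rho^n)\|u\|_{vbmo(\Omega)}$.

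For the boundary seminorm $[\nabla d \cdot \overline{u}]_{b^\infty(\Gamma)}$ the key observation is algebraic: $\nabla d \cdot Q u_1^e \equiv 0$ on all of $\Gamma^{\rho_0}$ by the very definition of $Q$, and $u_2^{ze}$ vanishes on $\Gamma_\rho$, hence on all balls $B_r(x)$ with $x \in \Gamma$ and $r < \rho$. Consequently $\nabla d \cdot \overline{u} = \nabla d \cdot P u_1^o$ in the region where the small-radius part of the $b^\infty$-seminorm samples, and the second estimate in Lemma \ref{OES} delivers the required control. For the large-radius part one uses the uniform $L^1_{\mathrm{ul}}$-bound on $\overline{u}$ coming from the $bmo(\mathbf{R}^n)$ estimate of the previous paragraph, combined with the same cube decomposition argument already employed in the proofs of Lemmas \ref{BEW} and \ref{IBE}.

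Finally, the support claim is immediate: $\operatorname{supp} \overline{u_1} \subset \overline{\Gamma^{2\rho}}$ from the cutoff $\theta_\rho$ and the reach condition for the even/odd extensions, while $\operatorname{supp} u_2^{ze} \subset \overline{\Omega}$, and both sets lie inside $\overline{\Omega_{2\rho}}$. It suffices to take $c_\Omega^{\ast\ast}$ no larger than the thresholds required simultaneously by Lemmas \ref{OEV}, \ref{OES} and by the zero-extension estimate, e.g.\ $c_\Omega^{\ast\ast} := c_\Omega^\varepsilon / 48$ with the same $\varepsilon$ fixed in the proof of Theorem \ref{MET}. I do not expect a genuine analytic obstacle here: since every nontrivial estimate has already been carried out in Section 4 and in Lemmas \ref{OEV}--\ref{OES}, the proof is essentially an assembly step, the only new input being the tautology $\nabla d \cdot Q u_1^e = 0$; the main care needed is merely bookkeeping the dependence of constants on $\rho$ so that the final $1/\rho^n$ rate is preserved.
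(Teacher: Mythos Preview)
Your proposal is correct and follows essentially the same assembly argument that the paper uses in Section 6: define $\overline{u} := \overline{u_1} + u_2^{ze}$ with $\overline{u_1} = P u_1^o + Q u_1^e$, invoke Lemma \ref{OES} and Lemma \ref{BEW} for the two pieces of $\overline{u_1}$, repeat the proof of Lemma \ref{IBE} for $u_2^{ze}$, and observe that $\nabla d \cdot Q u_1^e \equiv 0$ together with $\operatorname{supp} u_2^{ze} \subset \Omega \setminus \Gamma_\rho$ reduces the boundary seminorm to the term already controlled by Lemma \ref{OES}. The only cosmetic discrepancy is the value of $c_\Omega^{\ast\ast}$: the paper takes the minimum of $c_\Omega^\varepsilon/96$ and the additional threshold $c_0^2/(16n(1+L_{\partial W}) C_{L_\Gamma,n})$ inherited from Proposition \ref{BRD}, so your phrase ``no larger than the thresholds required simultaneously'' should be understood to include that constraint as well.
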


The constant $c_\Omega^{\ast \ast}$ can be taken as 
\[
c_\Omega^{\ast \ast}:= \mathrm{min} \, \{ \frac{c_0^2}{16n(1+L_{\partial W}) \cdot C_{L_\Gamma,n}}, \;\frac{c_\Omega^\varepsilon}{96} \}.
\]

\end{document}